\documentclass{amsart}

\usepackage{amsmath,amssymb,color,url}

\newtheorem{lemma}{Lemma}[section]
\newtheorem{prop}[lemma]{Proposition}
\newtheorem{cor}[lemma]{Corollary}
\newtheorem{thm}[lemma]{Theorem}
\newtheorem{example}[lemma]{Example}
\newtheorem{thm?}[lemma]{Theorem?}

\newtheorem{remark}[lemma]{Remark}


%

\newcommand{\F}{\mathbb{F}}

\newcommand{\ra}{\ensuremath{\rightarrow}}

\newcommand{\Z}{\mathbb{Z}}
\newcommand{\N}{\mathcal{N}}

\newcommand{\pp}{\mathfrak{p}}

\newcommand{\Q}{\mathbb{Q}}

\newcommand{\rank}{\operatorname{rank}}

\newcommand{\mm}{\mathfrak{m}}

\renewcommand{\N}{\mathbb{N}}

\newcommand{\Spec}{\operatorname{Spec}}
\newcommand{\MaxSpec}{\operatorname{MaxSpec}}
\newcommand{\qq}{\mathfrak{q}}

\newcommand{\rk}{\operatorname{rk}}
\newcommand{\rr}{\mathfrak{r}}

\begin{document}
\title{A Note on Rings of Finite Rank}

\author{Pete L. Clark}

\begin{abstract}
The rank $\rk(R)$ of a ring $R$ is the supremum of minimal cardinalities of generating sets of $I$ as $I$ ranges over ideals of $R$.  
 Matson showed that every $n \in \Z^+$ occurs as the rank of some ring $R$.  Motivated by the result of Cohen and Gilmer that a 
ring of finite rank has Krull dimension $0$ or $1$, we give four different constructions of 
rings of rank $n$ (for all $n \in \Z^+$).  Two constructions 
use one-dimensional domains, and the former of these directly generalizes Matson's construction.   Our third construction uses Artinian rings (dimension zero), and our last construction uses polynomial rings over local Artinian rings (dimension one, irreducible, not a domain).
\end{abstract}

\address{Department of Mathematics \\ Boyd Graduate Studies Research Center \\ University
of Georgia \\ Athens, GA 30602\\ USA}

\maketitle
\noindent

\section{Introduction}

\noindent
For a module $M$ over a ring\footnote{Here all rings are commutative and with multiplicative identity.} $R$, let $\mu(M)$ be the minimal cardinality of a set of generators of $M$ as an $R$-module, and let $\mu_*(M)$ be the supremum of $\mu(N)$ as $N$ ranges over 
all $R$-submodules of $M$.     We say $M$ has \textbf{finite rank} if $\mu_*(M) < \aleph_0$.  This implies that $M$ is a Noetherian $R$-module.  We define the \textbf{rank} $\rk(R)$ as $\mu_*(R)$.   In particular, for $n \in \N$, $\rk(R) = n$ means that every 
ideal of $R$ can be generated by $n$ elements and some ideal of $R$ cannot be generated by fewer than $n$ elements.
\\ \indent
This note is directly motivated by the following result.

\begin{thm}(Matson \cite{Matson08})
\label{MATSON} \\
a) For every $N \in \Z^+$, there is a domain $R_N$ with $\rank(R_N) = N$.  \\
b) One may take $R_N$ to be a subring 
of the ring of integers of $\Q(2^{\frac{1}{N}})$.
\end{thm}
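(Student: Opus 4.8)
The plan is to realize $R_N$ as a carefully chosen non-maximal order in the number field $K := \Q(2^{1/N})$, having a single, highly singular prime. Concretely, I would take
\[
R_N \ := \ \Z + 2\Ok \ \subseteq \ \Ok ,
\]
which is module-finite over $\Z$ (hence Noetherian) and is a domain, so part (b) is automatic; the substance is the equality $\rk(R_N)=N$. The case $N=1$ is just $R_1=\Z$, so assume $N\geq 2$. Set $\alpha:=2^{1/N}$. Since $x^N-2$ is Eisenstein at $2$, the prime $2$ is totally ramified in $K$: there is a unique prime $\pp$ of $\Ok$ over $2$, with $2\Ok=\pp^N$, residue field $\Ok/\pp=\F_2$, and $\alpha$ a uniformizer of the DVR $V:=\OO_{K,\pp}$, so that $2V=\alpha^N V$.

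The first step is a conductor computation: I would check that $(R_N:\Ok)=2\Ok$, that $\mm:=2\Ok$ is a maximal ideal of $R_N$ with $R_N/\mm\cong\F_2$, and --- crucially --- that $\mm$ is the \emph{only} prime of $R_N$ containing the conductor (because $\pp$ is the only prime of $\Ok$ over $2$). It follows that $A:=(R_N)_\mm$ is a one-dimensional Noetherian local domain with integral closure $V$, satisfying $\mm A=2V=\alpha^N V$ and $A/\mm A\cong\F_2$, while $(R_N)_{\mm'}$ is a DVR for every other maximal ideal $\mm'$.

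For the lower bound, localization cannot increase the number of generators, so Nakayama over $A$ gives
\[
\mu_{R_N}(\mm)\ \geq\ \mu_A(\mm A)\ =\ \dim_{\F_2}\!\big(\mm A/(\mm A)^2\big)\ =\ \dim_{\F_2}\!\big(2V/4V\big)\ =\ \dim_{\F_2}(V/\pp^N V)\ =\ N,
\]
whence $\rk(R_N)\geq N$. For the upper bound I would first bound $\rk(A)$: given a nonzero ideal $I$ of $A$, write $VI=\alpha^k V$ (an ideal of the DVR $V$); since $I\subseteq\mm A=\alpha^N V$ we get $k\geq N$, and $\mm A\cdot I=2V\cdot I=\alpha^{N+k}V$ sits inside $I\subseteq VI=\alpha^k V$, so
\[
\mu_A(I)\ =\ \dim_{\F_2}\!\big(I/\alpha^{N+k}V\big)\ \leq\ \dim_{\F_2}\!\big(\alpha^k V/\alpha^{N+k}V\big)\ =\ N .
\]
Since $\mu(I_{\mm'})\leq 1$ at every DVR localization and the generic point contributes only $\mu(I_{(0)})+\dim R_N=2$, the Forster--Swan theorem yields $\mu_{R_N}(I)\leq\max(N,2)=N$ for every ideal $I$, i.e.\ $\rk(R_N)\leq N$. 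Combining the bounds, $\rk(R_N)=N$.

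I expect the upper bound to be the main obstacle, for two reasons. First, one must genuinely pin down that the conductor equals the maximal ideal $2\Ok$ (rather than a proper power of $\pp$ lying in $R_N$): it is this that forces $A/\mm A=\F_2$ and makes the dimension counts collapse to the exact value $N$ instead of a larger multiple. Second, passing from the local bound $\rk(A)\leq N$ to the global statement cannot be done by a naive supremum over localizations --- a non-principal ideal of a Dedekind domain already shows that fails --- so one must invoke Forster--Swan, i.e.\ a basic-element argument absorbing the extra generator coming from the non-principal part of $I$ away from $\mm$. (The entire argument also runs with $\Z+2\,\Z[2^{1/N}]$ in place of $\Z+2\Ok$: the local picture at $2$ is identical, and all primes away from $2$ still contribute at most $2$.)
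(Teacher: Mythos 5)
Your proposal is correct, and your ring is (up to replacing $\Z[2^{1/N}]$ by $\OO_K$, which changes nothing locally at $2$ since $x^N-2$ is Eisenstein there) exactly the order $R(S,2)=\Z+2S$ by which the paper reproves Matson's theorem in Theorem \ref{SECONDBIGTHM} and Example \ref{EXAMPLE3.7}; your lower bound, computing $\dim_{\F_2}\pp/\pp^2=N$ at the unique prime over $2$, is also the paper's argument (the paper does it by writing down explicit $\Z$-bases of $\pp$ and $\pp^2$, you do it via the conductor and the totally ramified localization). Where you genuinely diverge is the upper bound, which you flag as the main obstacle and handle by a local multiplicity estimate in $A=(R_N)_{\mm}$ --- trapping each ideal between $\alpha^k V$ and $\mm_A I=\alpha^{N+k}V$ --- and then globalizing with Forster--Swan. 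The paper instead gets $\rk(R)\leq N$ in one line: $R$ is free of rank $N$ as a module over the PID $\Z$, so every ideal is a free $\Z$-module of rank at most $N$ and a fortiori needs at most $N$ generators over $R$; with that observation the ``hard direction'' is trivial and no local analysis or Forster--Swan is needed. Your route costs more but buys more: it is essentially the proof of the local-global principle $\rk(R)=\sup_{\pp}\rk(R_\pp)=e(R)$ (Theorem \ref{FIRSTBIGTHM}), and it would survive replacing $\Z$ by a Dedekind base that is not a PID, where the freeness trick is unavailable. Both arguments are complete; yours just routes through heavier machinery than this particular example requires.
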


\begin{remark}
\label{REMARK1}
Let $\iota: R \ra S$ be a ring map.  For an $R$-module $M$, let $\iota_*(M)$ be the $S$-module $M \otimes S$.  Then 
$\mu(\iota_*(M)) \leq \mu(M)$.
If every ideal of $S$ is $\iota_*(I)$ for some $I$, we get $\rk(S) \leq \rk(R)$.  This holds when $\iota$ is a quotient or a localization map.
\end{remark}
\noindent
For an $R$-module $M$ and a prime ideal $\pp$ of $R$, we put $M_{\pp} = M \otimes R_{\pp}$ and $\mu_{\pp}(M) = \mu(M_{\pp})$.
Remark \ref{REMARK1} gives $\mu_{\pp}(M) \leq \mu(M)$.  By way of a converse,  we have:

\begin{thm}(Forster-Swan \cite{Forster64} \cite{Swan67}) \\
Let $M$ be a finitely generated module over the Noetherian ring $R$.  Then 
\[ \mu(M) \leq \sup_{\pp \in \Spec R} \left( \mu_{\pp}(M) + \dim R/\pp \right). \]
\end{thm}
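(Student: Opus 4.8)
If $n := \sup_{\pp \in \Spec R}\bigl(\mu_\pp(M) + \dim R/\pp\bigr) = \infty$ there is nothing to prove, so assume $n < \infty$ (which forces $\dim R/\pp < \infty$ for every $\pp \in \operatorname{Supp} M$). For a finitely generated $R$-module $N$ put $\beta(N) := \sup_{\pp \in \operatorname{Supp} N}\bigl(\mu_\pp(N) + \dim R/\pp\bigr)$; then $\beta$ is nonincreasing under passage to quotients (since $\mu_\pp(N/N') \le \mu_\pp(N)$ and $\operatorname{Supp}(N/N') \subseteq \operatorname{Supp} N$), $\beta(M) \le n$, $\beta(N) \ge 1$ whenever $N \neq 0$, and $\beta(N) \le 0$ forces $N = 0$. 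The plan is to prove the following statement, which I call $(\dagger)$: \emph{for every nonzero finitely generated $N$ with $\beta(N) < \infty$ there exists $x \in N$ with $\beta(N/Rx) \le \beta(N) - 1$.} Granting $(\dagger)$, set $M_0 := M$ and, while $M_i \neq 0$, choose $x_{i+1} \in M_i$ with $\beta(M_{i+1}) \le \beta(M_i) - 1$, where $M_{i+1} := M_i/Rx_{i+1}$; since $\beta(M_0) \le n$ and $\beta$ drops by at least $1$ at each step, $M_i = 0$ for some $i \le n$, so $M$ is generated by $x_1,\dots,x_i$ and $\mu(M) \le n$.

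Now for $(\dagger)$. Call $x \in N$ \emph{basic at $\pp$} if its image in $N \otimes_R \kappa(\pp) = N_\pp/\pp N_\pp$ is nonzero; by Nakayama this is equivalent to $\mu_\pp(N/Rx) = \mu_\pp(N) - 1$, and otherwise $\mu_\pp(N/Rx) = \mu_\pp(N)$. Write $s := \beta(N)$ and let $T := \{\pp \in \operatorname{Supp} N : \mu_\pp(N) + \dim R/\pp = s\}$ be the set of ``critical'' primes; by definition of $s$, no prime of $\operatorname{Supp} N$ exceeds $s$. If $x \in N$ is basic at every $\pp \in T$, then for each $\pp \in \operatorname{Supp}(N/Rx) \subseteq \operatorname{Supp} N$ we get $\mu_\pp(N/Rx) + \dim R/\pp \le s - 1$ — this is $\le \mu_\pp(N) + \dim R/\pp \le s-1$ when $\pp \notin T$, and equals $\mu_\pp(N) + \dim R/\pp - 1 = s-1$ when $\pp \in T$ — so $\beta(N/Rx) \le s-1$. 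Thus $(\dagger)$ follows from: (a) $T$ is finite; and (b) given finitely many primes lying in $\operatorname{Supp} N$, some element of $N$ is basic at all of them.

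For (a) I would use the Fitting-ideal description $V(\operatorname{Fitt}_{j-1}(N)) = \{\pp \in \Spec R : \mu_\pp(N) \ge j\}$ for $j \ge 1$. Fix a critical prime $\pp$ with $\dim R/\pp = d$; since $\pp \in \operatorname{Supp} N$ we have $\mu_\pp(N) \ge 1$, hence $\mu_\pp(N) = s - d \ge 1$ and $d \le s-1$, so $\pp$ lies in the nonempty closed set $Z := V(\operatorname{Fitt}_{s-d-1}(N))$. Let $\pp_0$ be a minimal prime of $Z$ with $\dim R/\pp_0 = \dim Z$; then $\mu_{\pp_0}(N) \ge s - d \ge 1$, so $\pp_0 \in \operatorname{Supp} N$ and hence $s \ge \mu_{\pp_0}(N) + \dim R/\pp_0 \ge (s-d) + \dim Z$, i.e.\ $\dim Z \le d$. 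Since $\pp \in Z$ and $\dim R/\pp = d \ge \dim Z$, the prime $\pp$ must be a minimal prime of $Z$. So every critical prime of dimension $d$ is one of the finitely many minimal primes of $V(\operatorname{Fitt}_{s-d-1}(N))$, and as $d$ ranges over the finite set $\{0,1,\dots,s-1\}$ this shows $T$ is finite. Assertion (b) is the classical ``basic element at finitely many primes'' lemma, proved by a prime-avoidance argument; its one subtle point is that when some residue fields $\kappa(\pp)$ are finite one cannot simply take a generic $R$-linear combination of a generating set of $N$, and must instead enlarge the basic locus one prime at a time, correcting the current candidate by an element lying in the primes already handled and outside the new one.

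I expect the main obstacle to be part (a): correctly relating the Forster--Swan quantity $\beta(N)$ to the dimensions of the Fitting loci of $N$ so as to trap the critical primes among finitely many minimal primes. The finite-residue-field case of the prime avoidance in (b) is the other technical point, but a standard one. The reduction through $(\dagger)$, and the verification that basicness along $T$ forces $\beta(N/Rx) \le \beta(N) - 1$, should be routine once the definitions are in place.
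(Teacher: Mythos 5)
The paper does not actually prove this statement: it is quoted as the classical Forster--Swan theorem with citations to Forster and Swan, so there is no in-paper argument to compare against. Your sketch is a correct reconstruction of the standard proof (Forster's original argument, in the form later systematized by Eisenbud--Evans via ``basic elements''). The three pillars are all in place and sound: the reduction through $(\dagger)$ and the verification that basicness along the critical set $T$ drops $\beta$ by one; the finiteness of $T$ via the closed loci $\{\pp : \mu_\pp(N) \ge j\} = V(\operatorname{Fitt}_{j-1}(N))$ and the dimension count trapping critical primes of dimension $d$ among the minimal primes of $V(\operatorname{Fitt}_{s-d-1}(N))$ (your inequality $\dim Z \le d$ is exactly the right observation); and the finite-basicness lemma (b). The one detail I would make explicit in (b) is the ordering of the primes $\pp_1,\dots,\pp_k$ by nondecreasing $\dim R/\pp_i$, which guarantees $\pp_j \not\subseteq \pp_i$ for $j<i$; this is what makes the correction module $(\pp_1\cdots\pp_{i-1})N$ surject onto $N\otimes\kappa(\pp_i)$, and it is precisely the mechanism that circumvents the finite-residue-field obstruction you flag. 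With that spelled out, the sketch fills in to a complete proof.
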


\begin{example}
\label{EXAMPLE1}
a) We have $\rk(R) = 0$ iff $R$ is the zero ring.  \\
b) We have $\rk(R) = 1$ iff $R$ is a nonzero principal ring (every ideal is principal). \\
c) If $R$ is a nonprincipal Dedekind domain, then $\rk(R) = 2$: apply Forster-Swan.    Or: let $I$ be an ideal of $R$, let $0 \neq x \in R$, and factor $(x)$ as $\pp_1^{a_1} \cdots \pp_r^{a_r}$.  Then $R/(x) \cong \prod_{i=1}^r R/\pp_i^{a_i} \cong \prod_{i=1}^r R_{\pp_i}/(\pp_i R_{\pp_i})^{a_i}$ is principal, 
so $I = \langle x,y \rangle$ for some $y \in I$.  Asano \cite{Asano51} and Jensen \cite{Jensen63} 
showed: if in a domain $R$, for all nonzero ideals $I$ and nonzero $x \in I$ 
there is $y \in I$ with $I = \langle x,y \rangle$, then $R$ is Dedekind.
\end{example}
\noindent
The aim of this note is to explore the class of rings of finite rank with an eye to constructing further families with all 
possible finite ranks.  We begin in $\S$2 with the case of domains.  We review the pioneering work of Cohen and use it 
to deduce a local-global principle for domains of finite rank: Theorem \ref{FIRSTBIGTHM}.  In $\S$3 we show that given any PID $A$ with fraction field 
$F \supsetneq A$ and any field extension $K/F$ of degree $N \in \Z^{\geq 2}$, there is a nonmaximal $A$-order in $K$ of rank $N$, generalizing Theorem \ref{MATSON}.  We also construct, for any $2 \leq n \leq N$, a $\Z$-order in a degree $N$ 
number field with rank $n$.  In $\S$4 we consider the case of rings which are not domains.  We give a general discussion 
of Artinian rings (which always have finite rank) and show that there are Artinian rings of rank $n$ for any $n \in \Z^+$.   Finally we 
determine when a polynomial ring has finite rank, show that for any local Artinian ring $\rr$, the rank of $\rr[t]$ is bounded 
above by the length of $\rr$, and show that we have equality when $\rr$ is moreover principal, so that e.g. for all $n \in \Z^+$, 
$\Z/2^n \Z[t]$ has rank $n$.

\section{Domains of Finite Rank}
\noindent
I.S. Cohen initiated the study of ranks of domains.  We recall some of his results.

\begin{thm}(Cohen \cite{Cohen50})
\label{FIRSTCOHEN}
If $R$ is a domain of finite rank, then $\dim R \leq 1$.
\end{thm}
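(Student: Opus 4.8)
The plan is to prove the contrapositive: if $R$ is a domain with $\dim R \geq 2$, then $\rk(R) = \infty$. The mechanism is that the powers of a suitably chosen maximal ideal require unboundedly many generators. Since a ring of finite rank is Noetherian (as observed in the Introduction), there is nothing to prove unless $R$ is Noetherian; so assume this and, for contradiction, suppose that $\rk(R) < \infty$ while $\dim R \geq 2$. Fix a chain of primes $\pp_0 \subsetneq \pp_1 \subsetneq \pp_2$ and a maximal ideal $\mm \supseteq \pp_2$. Then $R_{\mm}$ is a Noetherian local domain with $\dim R_{\mm} \geq 2$, and applying Remark \ref{REMARK1} to the localization map $R \ra R_{\mm}$ gives $\rk(R_{\mm}) \leq \rk(R) < \infty$. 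Hence it suffices to derive a contradiction in the case where $(R,\mm)$ is itself a Noetherian local domain with $\dim R \geq 2$.

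In that case I take the ideals $\mm^n$ for $n \geq 1$. By Nakayama's lemma, a minimal generating set of the finitely generated ideal $\mm^n$ maps bijectively to a basis of the $k$-vector space $\mm^n / \mm \mm^n = \mm^n/\mm^{n+1}$, where $k = R/\mm$, so that $\mu(\mm^n) = \dim_k \mm^n/\mm^{n+1}$. Now I invoke the dimension theory of Noetherian local rings: the Hilbert--Samuel function
\[ n \longmapsto \operatorname{length}_R\!\left(R/\mm^{n+1}\right) \;=\; \sum_{i=0}^{n} \dim_k \mm^i/\mm^{i+1} \]
agrees, for $n \gg 0$, with a polynomial in $n$ of degree $\dim R$. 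If the numbers $\dim_k \mm^i/\mm^{i+1}$ were bounded, these partial sums would grow at most linearly in $n$, forcing the Hilbert--Samuel polynomial to have degree $\leq 1$ and hence $\dim R \leq 1$. Since $\dim R \geq 2$, the sequence $\mu(\mm^n) = \dim_k \mm^n/\mm^{n+1}$ is unbounded, and therefore $\rk(R) \geq \sup_{n} \mu(\mm^n) = \infty$, contradicting $\rk(R) < \infty$.

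The only substantial ingredient is the classical fact that the Hilbert--Samuel function of a Noetherian local ring is eventually a polynomial whose degree is the Krull dimension; everything else is the reduction to the local case, which is exactly the content of Remark \ref{REMARK1} for localization maps, together with Nakayama's lemma. I therefore do not anticipate a serious obstacle: the work lies in assembling these standard facts cleanly and in checking that localization at $\mm$ indeed preserves ``Noetherian domain of dimension $\geq 2$'' and does not increase the rank.
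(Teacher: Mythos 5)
Your argument is correct. The paper itself does not reproduce a proof of this theorem --- it only cites Cohen --- so there is no in-text argument to compare against, but what you give is a complete and standard one: reduce to a Noetherian local ring $(R,\mm)$ of dimension $\geq 2$ via Remark \ref{REMARK1} (every ideal of $R_{\mm}$ is extended from $R$, so localization cannot increase the rank), identify $\mu(\mm^n)$ with $\dim_k \mm^n/\mm^{n+1}$ by Nakayama, and observe that boundedness of these dimensions would force the Hilbert--Samuel function $n \mapsto \ell(R/\mm^{n+1})$ to grow at most linearly, contradicting the fundamental theorem of dimension theory which makes it eventually a polynomial of degree $\dim R \geq 2$. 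All the steps check out; the only cosmetic point is that in the paper's cardinal conventions the conclusion is $\rk(R_{\mm}) \geq \sup_n \mu(\mm^n R_{\mm}) = \aleph_0$ rather than ``$=\infty$.'' Worth noting: you never use the hypothesis that $R$ is a domain, so your argument in fact proves the stronger Cohen--Gilmer statement of Theorem \ref{4.1}c) directly --- any ring of finite rank is Noetherian of dimension $\leq 1$ --- without passing through Gilmer's reduction to the minimal primes (Theorem \ref{4.1}b)). That is a genuine simplification of the logical structure of \S 4.1, at the cost of invoking Hilbert--Samuel theory, which is heavier machinery than anything the paper otherwise relies on for this step.
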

\noindent
For $R$ Noetherian and $\pp \in \Spec R$, let $k(\pp)$ be the fraction field of $R/\pp$;  put
\[ z_{\pp}(R) = \dim_{k(\pp)} \pp R_{\pp}/\pp^2 R_{\pp}. \]
Then $z_{\pp}(R) \geq  \dim R_{\pp}$; $\pp$ is \textbf{regular} if equality holds, otherwise \textbf{singular}.  Put
\[z(R) = \sup_{\pp \in \MaxSpec R} z_{\pp}(R). \]
Suppose $R$ is a one-dimensional Noetherian \emph{domain}.  Then by
Krull-Akizuki \cite[Thm. 18.7]{CA} the normalization $\overline{R}$ is Dedekind and $\Spec \overline{R} \ra \Spec R$ is surjective and finite-to-one.  By Example \ref{EXAMPLE1}, if $R = \overline{R}$ then $\rk(R) \leq 2$ with equality iff $R$ is not principal.  Henceforth we 
suppose $R$ is \emph{not} normal.  Let \[\mathfrak{c} = (R:\overline{R}) = 
\{ x \in K \mid x \overline{R} \subset R \} \]be the conductor of $R$: it is the largest ideal of $\overline{R}$ which is also an ideal of $R$.  
If $\pp$ is regular then $\pp \overline{R}$ is also prime 
and $R_{\pp} \stackrel{\sim}{\ra} \overline{R}_{\pp \overline{R}}$.  We say $R$ is \textbf{nearly Dedekind} if $\mathfrak{c} \neq 0$; equivalently, if $\overline{R}$ is a finitely generated $R$-module.  In a nearly Dedekind domain, the singular primes are characterized as: the primes $\pp$ such that $\pp + \mathfrak{c} \subsetneq R$; the radicals of the ideals in a primary decomposition of $\mathfrak{c}$; or the primes of $R$ lying under primes of $\overline{R}$ which divide $\mathfrak{c}$.  They are finite in number.


\begin{thm}(Cohen \cite{Cohen50})
A nearly Dedekind domain has finite rank.
\end{thm}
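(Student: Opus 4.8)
The plan is to reduce to the local case via the Forster--Swan theorem, and there to measure every ideal against the single fixed Artinian module $\overline{R}/\mathfrak{c}$, exploiting that the normalization of a one-dimensional Noetherian local domain is a semilocal Dedekind domain, hence a PID.

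First I would prove the local statement: if $(R,\mm)$ is nearly Dedekind and local, then $\rk(R) \leq \operatorname{length}_R(\overline{R}/\mathfrak{c})$, which is finite because $R/\mathfrak{c}$ is Artinian and $\overline{R}/\mathfrak{c}$ is a finitely generated module over it. Since $\overline{R}$ is a PID, a nonzero ideal $I$ of $R$ satisfies $I\overline{R} = \pi\overline{R}$ for some $\pi$, and $\mathfrak{c} = c_0\overline{R}$ for some $c_0 \in \mathfrak{c} \subseteq R$. As $\mathfrak{c}$ is an ideal of $\overline{R}$, the product $\mathfrak{c}I$ is an $\overline{R}$-module, so $\mathfrak{c}I = \mathfrak{c}\cdot I\overline{R} = c_0\pi\overline{R}$, and multiplication by $\pi^{-1}$ gives an $\overline{R}$-module isomorphism $I\overline{R}/\mathfrak{c}I \cong \overline{R}/\mathfrak{c}$. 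From $\mathfrak{c}I \subseteq I \subseteq I\overline{R}$ we get $\operatorname{length}_R(I/\mathfrak{c}I) \leq \operatorname{length}_R(\overline{R}/\mathfrak{c})$; and since $R$ is not normal we have $\mathfrak{c} \subseteq \mm$, so $I/\mm I$ is a quotient of $I/\mathfrak{c}I$ and therefore $\mu(I) = \dim_{R/\mm}(I/\mm I) \leq \operatorname{length}_R(I/\mathfrak{c}I) \leq \operatorname{length}_R(\overline{R}/\mathfrak{c})$.

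Then I would globalize. For a maximal ideal $\pp$ of a nearly Dedekind domain $R$, the localization $R_\pp$ is a one-dimensional Noetherian local domain with normalization $S^{-1}\overline{R}$ (where $S = R \setminus \pp$), module-finite over $R_\pp$, and with conductor $\mathfrak{c}R_\pp$. If $\pp$ is regular, then $\mathfrak{c} \not\subseteq \pp$, so $R_\pp = S^{-1}\overline{R}$ is a DVR and $\rk(R_\pp) = 1$; if $\pp$ is singular, then $R_\pp$ is nearly Dedekind and not normal, so $\rk(R_\pp) < \aleph_0$ by the previous step. Since there are only finitely many singular primes, $B := \sup_{\pp\ \mathrm{maximal}} \rk(R_\pp)$ is finite. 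Now, for any ideal $I$ of $R$, the Forster--Swan theorem gives $\mu(I) \leq \sup_{\pp \in \Spec R}(\mu_\pp(I) + \dim R/\pp)$: the term for $\pp = (0)$ is at most $1 + 1 = 2$ because $R_{(0)}$ is a field, and for maximal $\pp$ the term is $\mu_\pp(I) = \mu(IR_\pp) \leq \mu_*(R_\pp) = \rk(R_\pp) \leq B$. Hence $\mu(I) \leq \max(2,B)$ for every ideal $I$, i.e., $\rk(R) \leq \max(2,B) < \aleph_0$.

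I expect the local bound to be the crux. The point is to avoid bounding $\operatorname{length}(R/\langle x\rangle)$ for a varying $x \in I$, and instead to sandwich $I$ between $\mathfrak{c}I$ and $I\overline{R}$, whose quotient --- since $\overline{R}$ is a PID --- is always isomorphic to the fixed module $\overline{R}/\mathfrak{c}$. One can also bypass Forster--Swan with a cruder global estimate: $\mathfrak{c}I$ is an ideal of the Dedekind domain $\overline{R}$, hence generated by two elements over $\overline{R}$ and so by at most $2\,\mu_R(\overline{R}) \leq 2\bigl(1 + \operatorname{length}_R(\overline{R}/R)\bigr)$ elements over $R$, while $I/\mathfrak{c}I$ embeds in $I\overline{R}/\mathfrak{c}I$, a module whose $R$-length equals that of $\overline{R}/\mathfrak{c}$; adding these gives $\rk(R) < \aleph_0$.
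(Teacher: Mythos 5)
Your proof is correct. Note that the paper does not actually prove this statement --- it is quoted as Cohen's theorem with a citation to \cite{Cohen50} --- so there is no in-text argument to compare against; the closest the paper comes is Theorem \ref{FIRSTBIGTHM}, which would yield finiteness by combining Forster--Swan with Sally's bound $\mu(IR_\pp) \le e_\pp(R)$ and the finiteness of the set of singular primes. Your route replaces the multiplicity input by the classical conductor sandwich: locally, $\overline{R}$ is semilocal Dedekind hence a PID, so $\mathfrak{c}I = \mathfrak{c}\cdot I\overline{R}$ is principal over $\overline{R}$ and $I\overline{R}/\mathfrak{c}I \cong \overline{R}/\mathfrak{c}$, giving the uniform bound $\mu(I) \le \operatorname{length}_R(\overline{R}/\mathfrak{c})$ independent of $I$; the globalization via Forster--Swan (with the generic point contributing $1+1=2$) is handled correctly, and your closing remark even shows how to dispense with Forster--Swan entirely by writing $\mu(I) \le \mu(\mathfrak{c}I) + \operatorname{length}_R(I/\mathfrak{c}I)$ with $\mathfrak{c}I$ a $2$-generated ideal of the Dedekind domain $\overline{R}$. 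What your approach buys is self-containedness and an explicit, ideal-independent bound in terms of $\overline{R}/\mathfrak{c}$; what the paper's multiplicity machinery buys is the sharper identity $\rk(R) = e(R)$, which your conductor bound generally overshoots. The only points worth tightening are cosmetic: state the local bound for \emph{nonzero} ideals (the zero ideal is trivial), and observe that $\mathfrak{c}\subseteq\pp$ forces $R_\pp$ to be non-normal so that the local lemma's hypothesis $\mathfrak{c}R_\pp \subseteq \pp R_\pp$ is available --- which you do implicitly.
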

\noindent
Let $(R,\mm)$ be a one-dimensional local Noetherian domain.  Then the sequence $\{\dim_{R/\mm} \mm^i/\mm^{i+1}\}_{i=1}^{\infty}$ 
is eventually constant \cite[p. 40]{Sally78}; its eventual value is the \textbf{multiplicity}  $e(R)$ of $R$.  We put $e_{\pp}(R) = e(R_{\pp})$ and $e(R) = \sup_{\pp \in \MaxSpec R} e_{\pp}(R)$.

\begin{example}(Sally \cite[p. 5]{Sally78}) For a field $k$, put $R = k[[t^3,t^4]]$.  Then $R$ is a one-dimensional Noetherian 
local domain with maximal ideal $\pp = \langle t^3,t^4 \rangle$ and $R/\pp = k$.  Moreover we have $\pp^i = \langle t^{3i}, t^{3i+1}, t^{3i+2} \rangle = t^{3i} k[[t]]$ for 
all $i \geq 2$, so 
\[\mu(\pp) =  \dim_k \pp/\pp^2 = 2; \ \forall i \geq 2, \ \mu(\pp^i) = \dim_k \pp^i/\pp^{i+1} = 3. \]
Thus $z(R) = 2 < 3 = e(R)$.
\end{example}
\noindent
The following result computes the rank function in terms of the local multiplicities.

\begin{thm}
\label{FIRSTBIGTHM}
Let $R$ be a one-dimensional, non-normal Noetherian domain.  Then
\[  \rk(R) = \sup_{\pp \in \MaxSpec R} \rk(R_{\pp}) = e(R) . \]
\end{thm}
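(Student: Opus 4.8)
The plan is to establish the two equalities separately, proceeding from the local side outward. First I would handle the local-global reduction $\rk(R) = \sup_{\pp} \rk(R_{\pp})$. The inequality $\rk(R_{\pp}) \leq \rk(R)$ for each maximal $\pp$ is immediate from Remark \ref{REMARK1}, since localization is of the allowed type. For the reverse, I would take an arbitrary ideal $I \subseteq R$ and argue that $\mu(I) \leq \sup_{\pp} \mu_{\pp}(I)$. This is where I invoke the structure of a one-dimensional Noetherian domain: every nonzero ideal $I$ contains a nonzero element $x$, and $R/(x)$ is a zero-dimensional Noetherian ring, hence a finite product $\prod_i R_{\pp_i}/(\text{image of }x)$ over the (finitely many) maximal ideals $\pp_i \supseteq (x)$. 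So $I/(x)$ as an $R/(x)$-module decomposes over this product, and the number of generators needed for $I/(x)$ is the max of the numbers needed locally; lifting to $I$ via Nakayama and adjoining $x$ shows $\mu(I) \leq 1 + \sup_i \mu_{\pp_i}(I/(x))$. One must be a little careful to get the sharp bound $\mu(I) \le \sup_\pp \mu_\pp(I)$ rather than a bound that is off by one — the Forster--Swan theorem quoted in the excerpt gives exactly $\mu(I) \le \sup_\pp(\mu_\pp(I) + \dim R/\pp)$, and since $R$ is one-dimensional a maximal $\pp$ contributes $\mu_\pp(I)$ while the generic point contributes $\mu_{(0)}(I) + 1 = 2$; here one uses that $\rk(R_\pp) \ge 2$ for every singular $\pp$ (because $z_\pp(R) \ge 2$ there, as $R$ is not normal) together with the fact that away from the finitely many singular primes $R$ is already nearly Dedekind-like, so the generic contribution of $2$ is dominated. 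So the cleanest route is: cite Forster--Swan for $\le$, then observe the generic term never exceeds the supremum of the local ranks because some singular $\pp$ has $\rk(R_\pp) \ge 2$, or $R$ is Dedekind and the statement is trivial.

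Next I would prove $\rk(R_{\pp}) = e(R_{\pp})$ for a one-dimensional Noetherian local domain $(R_\pp, \mm)$. Write $A = R_\pp$, $\mm$ its maximal ideal, $k = A/\mm$. The inequality $e(A) \le \rk(A)$ is the easy direction: for $i$ large, $\mu(\mm^i) = \dim_k \mm^i/\mm^{i+1} = e(A)$ by definition of multiplicity and Nakayama, so some ideal (a large power of $\mm$) needs exactly $e(A)$ generators, giving $\rk(A) \ge e(A)$. The substantive direction is $\mu(I) \le e(A)$ for every ideal $I$ of $A$. Here the key tool is that in a one-dimensional local domain there is a nonzerodivisor $a \in \mm$ (any nonzero element of $\mm$) with $\mm^{n+1} = a\mm^n$ for all large $n$; more usefully, one can choose a \emph{minimal reduction}: since $k$ is the residue field of a one-dimensional local ring, there is an element $a \in \mm$ that is a reduction of $\mm$, i.e. $\mm^{n+1} = a \mm^n$ for $n \gg 0$, and then $e(A) = \operatorname{length}(A/aA) = \dim_k A/aA$ (this is the standard identification of the multiplicity of a one-dimensional Cohen--Macaulay — here domain — local ring with the colength of a minimal reduction). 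Given that, for any ideal $I$ one looks at $I/aI$, a module over $A/aA$ which has length $\le \dim_k(A/aA) = e(A)$; since $\mu(I) = \dim_k I/\mm I \le \operatorname{length}_{A/aA}(I/aI) \le e(A)$ by Nakayama applied to the Artinian quotient, we are done. I expect this minimal-reduction step — producing the element $a$ and identifying $\operatorname{length}(A/aA)$ with $e(A)$ — to be the technical heart; it rests on the residue field being infinite or, if $k$ is finite, on a standard trick (passing to $A[t]_{\mm[t]}$, which has the same rank and multiplicity and an infinite residue field) or on the observation that in dimension one a single sufficiently general element of $\mm \setminus \mm^2$ already works when combined with the eventual stability of the Hilbert function.

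Finally I would assemble the pieces: $\rk(R) = \sup_\pp \rk(R_\pp) = \sup_\pp e(R_\pp) = e(R)$, the middle equality being the local computation just sketched and the outer ones being the definitions of $\rk(R)$ (via the local-global principle of the first paragraph) and of $e(R)$. The main obstacle, as noted, is the local bound $\mu(I) \le e(A)$ — equivalently, that no ideal of a one-dimensional local Noetherian domain needs more generators than its multiplicity. Everything else is either a definition-chase or an application of the quoted Forster--Swan and Krull--Akizuki results. A secondary subtlety worth flagging in the writeup is the finite-residue-field case in the reduction argument, which should be dispatched by base change to a polynomial ring, noting that this operation leaves both $\rk$ and $e$ unchanged.
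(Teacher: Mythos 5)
Your proposal is correct and follows essentially the same route as the paper: the sandwich $e(R) \leq \sup_{\pp} \rk(R_{\pp}) \leq \rk(R) \leq e(R)$, with Remark \ref{REMARK1} for the middle inequality and Forster--Swan plus the local bound $\mu(IR_{\pp}) \leq e_{\pp}(R)$ for the last (the paper simply cites Sally for that bound, and your explicit handling of the generic-point term, via $e(R)\geq 2$ from non-normality, is exactly what the paper leaves implicit). The only soft spot is your minimal-reduction sketch of Sally's theorem: the step $\ell(I/aI) \leq \ell(A/aA)$ does not follow merely from $I/aI$ being an $A/aA$-module, but needs that $I$ is torsion-free of rank one so that $\ell(I/aI) = e((a);I) = e((a);A) = \ell(A/aA)$ --- a standard fact, and in any case avoidable by citing \cite[Thm.\ 3.1.1]{Sally78} as the paper does.
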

\begin{proof}
For all $\pp \in \MaxSpec R$ we have $e_{\pp}(R) \leq \rk(R_{\pp})$ and thus \[e(R) \leq  \sup_{\pp \in \MaxSpec R} \rk(R_{\pp}). \]
By Remark \ref{REMARK1} we have \[\sup_{\pp \in \MaxSpec R} \rk(R_{\pp}) \leq \rk(R). \]  Let $I$ be a nonprincipal ideal of $R$.  The Forster-Swan Theorem gives 
\[ \mu(I) \leq \sup_{\pp \in \MaxSpec R} \mu(I R_{\pp}). \]
The domain $R_{\pp}$ is one-dimensional local Cohen-Macaulay, 
so by \cite[Thm. 3.1.1]{Sally78} we have $\mu(I R_{\pp}) \leq e_{\pp}(R)$.  Thus $\mu(I) \leq e(R)$ and so $\rk(R) \leq e(R)$.
\end{proof}

\begin{remark}
In Theorem \ref{FIRSTBIGTHM} one can have $\rk(R) = \aleph_0$ \cite[pp. 38-40]{Cohen50}. 
\end{remark}

\section{Nonmaximal Orders}

\subsection{A First Example}
\textbf{} \\ \\ \noindent
One knows examples of local nearly Dedekind domains $R$ with multiplicity $e(R)$ any given
 $n \in \Z^+$: e.g. \cite{Watanabe73}.  The following is perhaps the most familiar.

\begin{example}
\label{EXAMPLE4.1}
For a field $k$ and $n \in \Z^+$, let \[R_n = k[[t^n,t^{n+1},\ldots,t^{2n-1}]] = k[[t^n]] + t^n k[[t]] = k+t^nk[[t]]. \]  Then $R_n$ is local nearly Dedekind with maximal ideal $\mm = \langle t^n,\ldots,t^{2n-1} \rangle = t^n k[[t]]]$ and $R/\mm = k$.  For $i \in \Z^+$ 
we have $\mm^i = \langle t^{in},\ldots,t^{(i+1)n-1} \rangle = t^{in} k[[t]]$, so 
\[ \rk(R) = e(R) = \lim_{i \ra \infty} \dim_{R/\mm} \mm^i/\mm^{i+1} = \lim_{i \ra \infty} n = n  = z(R). \]
\end{example}


\subsection{Nonmaximal Orders I: Maximal Rank}
\textbf{} \\ \\ \noindent
Let $A$ be a PID with fraction field $F$, and let $K/F$ be a field extension of degree $N \in \Z^{\geq 2}$.  An \textbf{A-order in K} 
is an $A$-subalgebra $R$ of $K$ which is finitely generated as an $A$-module and such that $F \otimes_A R = K$.  We say $R$ is an \textbf{A-order of degree N}.   The structure theory of modules over a PID implies $R \cong_A A^N$.  \\ \indent
Let $R$ be an $A$-order in $K$.  Then its normalization $\overline{R}$ is the integral closure of $A$ in $K$.  By Krull-Akizuki, $\overline{R}$ is a Dedekind domain.  If $\overline{R}$ is finitely generated as an $A$-module then it is an $R$-order in $K$, the unique maximal order.  It can happen that $\overline{R}$ is not a finitely generated $A$-module, but $\overline{R}$ is finitely generated if $K/F$ is separable 
or $A$ is finitely generated over a field.  

\begin{remark}
\label{REMARK4.2}
Let $A$ be a PID, not a field, with fraction field $F$, and let $K/F$ be a field extension of degree $N \in \Z^{\geq 2}$.  If the integral closure $S$ of $A$ in $K$ is not finitely generated as an $A$-module, then $K$ admits no normal $A$-order.  But it always admits some $A$-order: 
start with an $F$-basis of $K$, scale to get an $F$-basis $\alpha_1,\ldots,\alpha_N$ of elements integral over $A$, and take $S = A[\alpha_1,\ldots,\alpha_N]$.
\end{remark}

\begin{prop}
\label{4.2}
Let $N \in \Z^{\geq 2}$, let $A$ be a PID, and let $R$ be a non-normal $A$-order of degree $N$.   If there is $\pp \in \MaxSpec R$ 
such that $z_{\pp}(R) = N$, then $\rk(R) = N$.
\end{prop}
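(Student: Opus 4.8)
The plan is to sandwich $\rk(R)$ between $N$ from below and $N$ from above. For the lower bound, the hypothesis gives a maximal ideal $\pp$ with $z_{\pp}(R) = \dim_{k(\pp)} \pp R_{\pp}/\pp^2 R_{\pp} = N$. Since $R$ is a one-dimensional Noetherian domain (it is a non-normal $A$-order, hence finitely generated as an $A$-module with $\Frac R = K$ of dimension one over $\Frac A$), the localization $R_{\pp}$ is a one-dimensional Noetherian local domain, and by Nakayama $\mu(\pp R_{\pp}) = \dim_{k(\pp)} \pp R_{\pp}/\pp^2 R_{\pp} = N$. Hence $\rk(R_{\pp}) \geq N$, and by Remark \ref{REMARK1} (localization does not increase rank) we get $\rk(R) \geq \rk(R_{\pp}) \geq N$.

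For the upper bound, I would apply Theorem \ref{FIRSTBIGTHM}: since $R$ is a one-dimensional, non-normal Noetherian domain, $\rk(R) = e(R) = \sup_{\qq} e_{\qq}(R)$. So it suffices to bound every local multiplicity $e_{\qq}(R)$ by $N$. Here the key input is that $R$ is an $A$-order: as an $A$-module $R \cong A^N$, and for any maximal ideal $\qq$ of $R$ lying over the maximal ideal $\pi$ of the DVR $A_{(\pi)}$ (note $A$ is not a field once $N \geq 2$ forces a nontrivial extension — actually one should check $A$ is not a field; if $A$ were a field then $R$ would be a finite-dimensional $F$-algebra that is a domain, hence a field, hence normal, contradicting non-normality, so $A$ is genuinely a PID that is not a field), the local ring $R_{\qq}$ is a module-finite extension of the DVR $A_{(\pi)}$, of rank at most $N$. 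The multiplicity of $R_{\qq}$ with respect to its maximal ideal is bounded by the multiplicity computed against the parameter ideal $\pi R_{\qq}$, which in turn equals the length $\ell_{R_{\qq}}(R_{\qq}/\pi R_{\qq})$. Summing over the (finitely many) primes $\qq$ over $\pi$ gives $\sum_{\qq \mid \pi} \ell_{R_{\qq}}(R_{\qq}/\pi R_{\qq}) = \ell_{A_{(\pi)}}(R_{(\pi)}/\pi R_{(\pi)}) = \dim_{A/(\pi)} (R/\pi R)$, and since $R \cong A^N$ as an $A$-module this dimension is exactly $N$. Therefore each $e_{\qq}(R) \leq N$, whence $e(R) \leq N$ and $\rk(R) \leq N$.

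Combining the two bounds yields $\rk(R) = N$.

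The main obstacle is the upper bound: one must be careful that the multiplicity $e(R_{\qq})$ — defined as the eventual value of $\dim_{R_{\qq}/\qq R_{\qq}} \qq^i R_{\qq}/\qq^{i+1} R_{\qq}$ — is correctly compared to the $\pi$-adic length. The clean way is to invoke that for a one-dimensional Cohen-Macaulay local ring $(S,\mm)$ and any nonzerodivisor $x \in \mm$, the multiplicity $e(S) = e(xS; S) \leq \ell_S(S/xS)$, with the parameter $x = \pi$; this is exactly the kind of statement available from \cite[Thm. 3.1.1]{Sally78} already used in the proof of Theorem \ref{FIRSTBIGTHM}. One then only needs the elementary additivity of length along the finitely many primes over $\pi$ together with $R \cong_A A^N$. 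The lower-bound direction is routine Nakayama plus Remark \ref{REMARK1}; the only subtlety there is confirming $z_{\pp}(R) = N$ directly forces $\rk(R_{\pp}) \geq N$ because $\pp R_{\pp}$ is itself an ideal of $R_{\pp}$ requiring $N$ generators.
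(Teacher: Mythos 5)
Your argument is correct, but the upper bound takes a much longer road than the paper's. The paper disposes of $\rk(R) \leq N$ in one line: since $R \cong_A A^N$ and $A$ is a PID, every ideal $I$ of $R$ is an $A$-submodule of a free $A$-module of rank $N$, hence is itself a free $A$-module of rank at most $N$, so $\mu_R(I) \leq \mu_A(I) \leq N$. No Cohen--Macaulayness, no multiplicity theory, and no appeal to Theorem \ref{FIRSTBIGTHM} is needed; in particular the paper bounds $\mu(I)$ for every ideal directly rather than passing through the local--global principle $\rk(R) = e(R)$. Your route---bounding each $e_{\qq}(R)$ by $\ell_{R_{\qq}}(R_{\qq}/\pi R_{\qq})$ and summing over the primes above $\pi$---does work, and it buys slightly more: it shows $e(R) \leq N$, i.e.\ that all local multiplicities are at most $N$, not merely that ideals need at most $N$ generators. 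Two of your intermediate equalities should be inequalities, both pointing in the harmless direction: $\sum_{\qq \mid \pi} \ell_{R_{\qq}}(R_{\qq}/\pi R_{\qq}) = \ell_R(R/\pi R)$ is in general only $\leq \dim_{A/(\pi)}(R/\pi R)$, since each composition factor $R/\qq$ contributes $[R/\qq : A/(\pi)] \geq 1$ to the $A/(\pi)$-dimension (e.g.\ an inert prime gives strict inequality); and $e(R_{\qq}) = e(\pi R_{\qq}; R_{\qq})$ should read $e(R_{\qq}) \leq e(\pi R_{\qq}; R_{\qq}) = \ell(R_{\qq}/\pi R_{\qq})$, the equality holding only when $\pi$ generates a reduction of the maximal ideal. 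With those adjustments the bound $e_{\qq}(R) \leq N$ stands. Your lower bound, $N = z_{\pp}(R) = \mu(\pp R_{\pp}) \leq \rk(R_{\pp}) \leq \rk(R)$, is essentially the paper's, and your observation that one must rule out $A$ being a field is a point the paper leaves implicit.
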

\begin{proof}
Because $R$ is free of rank $N$ as a module over the PID $A$, every ideal of $I$ of $R$ is a free $R$-module of rank at most $N$ 
and thus $\mu(I) \leq N$. so $\rk(R) \leq N$.  On the other hand $\rk(R) \geq e_{\pp}(R) \geq z_{\pp}(R) = N$.
\end{proof}
\noindent
We say an $A$-order in $K$ has \textbf{maximal rank} 
if $\rk(R) = N = [K:F]$.  

\begin{thm}
\label{SECONDBIGTHM}
Let $A$ be a PID with fraction field $F \supsetneq A$, and let $K/F$ be a field extension of degree $N \in \Z^{\geq 2}$.  Then there is an 
$A$-order $R$ in $K$ of maximal rank.
\end{thm}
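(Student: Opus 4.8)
The plan is to construct an $A$-order $R$ in $K$ together with a maximal ideal $\pp$ of $R$ whose singularity is as large as possible, namely $z_\pp(R) = N$, and then quote Proposition \ref{4.2}. So the goal reduces to a local construction: find $R$ and $\pp$ with $\dim_{R/\pp}\pp R_\pp/\pp^2 R_\pp = N$. Since $R$ is free of rank $N$ over the PID $A$, any ideal needs at most $N$ generators, so $z_\pp(R) \le N$ automatically; the content is in forcing equality.

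First I would pick a prime $\pi$ of $A$ (possible since $A \supsetneq F$, i.e. $A$ is not a field) and a primitive element: scale an $F$-basis of $K$ to get $\alpha$ with $K = F(\alpha)$ and $\alpha$ integral over $A$, so that $A[\alpha]$ is an $A$-order of degree $N$ with $1,\alpha,\dots,\alpha^{N-1}$ an $A$-basis. The natural candidate is then $R = A + \pi A[\alpha] = A + \pi\alpha A + \pi\alpha^2 A + \cdots + \pi\alpha^{N-1}A$, which is an $A$-subalgebra (one checks $\pi\alpha^i \cdot \pi\alpha^j \in \pi A[\alpha] \subseteq R$) that is free of rank $N$ over $A$ with $F\otimes_A R = K$, hence an $A$-order; and it is non-normal because $\alpha \notin R$ while $\alpha$ is integral. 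This mimics Example \ref{EXAMPLE4.1}, where $k[[t^n,\dots,t^{2n-1}]] = k + t^n k[[t]]$ played exactly this role with $A = k[[t^n]]$, $\pi = t^n$, $\alpha = t$ — the present construction is the abstract version of that, and it also matches the shape of Theorem \ref{MATSON}(b) when $A=\Z$, $\pi=2$, $\alpha = 2^{1/N}$.

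Next I would identify the relevant maximal ideal and compute $z_\pp(R)$. Take $\mm = \pi A + \pi\alpha A + \cdots + \pi\alpha^{N-1}A = \pi A[\alpha]$ — the conductor of $R$ into $\overline{R} \supseteq A[\alpha]$ — which is an ideal of $R$ with $R/\mm \cong A/\pi A =: k$, a field, so $\mm$ is maximal. I want to show $\mm$ is the only prime over $\pi$ and that $\dim_k \mm/\mm^2 = N$. For the latter: $\mm^2 = \pi^2 A[\alpha]$ (using $\mm = \pi A[\alpha]$ as an $A[\alpha]$-ideal and $\pi \in A$), so $\mm/\mm^2 = \pi A[\alpha]/\pi^2 A[\alpha] \cong A[\alpha]/\pi A[\alpha]$ as a $k$-module, and since $A[\alpha]$ is free of rank $N$ over $A$, this quotient is an $N$-dimensional $k$-vector space. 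Thus $z_\mm(R) = N$ and Proposition \ref{4.2} gives $\rk(R) = N$.

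The main obstacle is a subtlety I glossed over: when $K/F$ is inseparable or $A$ is not finitely generated over a field, $\overline{R}$ may fail to be a finitely generated $A$-module (Remark \ref{REMARK4.2}), so I cannot freely invoke the maximal order — but the argument above never needs $\overline{R}$ finitely generated, only that $A[\alpha]$ is a finite $A$-module and $\alpha$ is integral but not in $R$, which is enough for non-normality and for the $\mm/\mm^2$ computation. A second point to verify carefully is that $\mm^2$ equals $\pi^2 A[\alpha]$ and not something larger inside $R$: one must check $\mm^2 \subseteq R$ computed as an $R$-ideal coincides with the $A[\alpha]$-ideal $\pi^2 A[\alpha]$, which follows because $\pi \cdot A[\alpha] \subseteq R$ forces $\pi^2 A[\alpha] = \mm\cdot \mm \subseteq \mm^2 \subseteq \pi^2 A[\alpha]$. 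Once these two technical checks are in place the theorem follows immediately.
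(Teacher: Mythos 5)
Your construction is essentially the paper's: the paper likewise takes $R = A + xS$ for an $A$-order $S$ and a nonzero nonunit $x \in A$, identifies the prime $\pp = pA + xS$ over a prime factor $p$ of $x$, computes $\pp/\pp^2 \cong_A (A/pA)^N$, and invokes Proposition \ref{4.2}. Your local computation ($\mm = \pi A[\alpha]$, $\mm^2 = \pi^2 A[\alpha]$, $\mm/\mm^2 \cong A[\alpha]/\pi A[\alpha] \cong k^N$) is correct, as are your checks that $R$ is an $A$-order, that it is non-normal, and that $z_{\mm}(R)$ may be computed from the global quotient $\mm/\mm^2$.

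The one genuine gap is your very first step: you assume $K/F$ has a primitive element $\alpha$ with $K = F(\alpha)$, obtained by ``scaling an $F$-basis.'' Scaling a basis makes its elements integral over $A$; it does not produce a single field generator, and for inseparable extensions no such generator need exist. For example, take $A = \F_p(u)[v]$, so $F = \F_p(u,v)$, and $K = F(u^{1/p}, v^{1/p})$: then $[K:F] = p^2$, but every $\beta \in K$ satisfies $\beta^p \in F$, so $[F(\beta):F] \leq p$ and $K/F$ is not simple. Since the theorem is stated for arbitrary finite extensions --- and Remark \ref{REMARK4.2} makes clear the inseparable case is in scope --- this case must be covered. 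The repair is exactly what the paper does: replace $A[\alpha]$ by $S = A[\alpha_1,\ldots,\alpha_N]$, where $\alpha_1,\ldots,\alpha_N$ is a scaled integral $F$-basis of $K$; then $S$ is a finitely generated torsion-free, hence free, $A$-module of rank $N$, and by Hermite's Lemma one may take $1$ as part of an $A$-basis. Your computation then goes through verbatim with $S$ in place of $A[\alpha]$: for $\mm = \pi S$ one has $A \cap \pi S = \pi A$ (because $S \cap F = A$, $A$ being integrally closed), $\mm^2 = \pi^2 S$ since $S \cdot S = S$, and $\mm/\mm^2 \cong S/\pi S \cong k^N$.
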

\begin{proof}
By Remark \ref{REMARK4.2}, there is an $A$-order $S$ in $K$.  Let $x$ be a nonzero, nonunit in $A$, so $x = p_1^{a_1} \cdots p_r^{a_r}$ where $p_1,\ldots,p_r$ are nonassociate 
prime elements.  Put 
\[ R = R(S,x) = A + x S. \]
We {\sc claim}: (i) $R$ is an $A$-order; (ii) for $1 \leq i \leq r$ there is a unique $\pp_i \in \Spec R$
with $\pp_i \cap A = (p_i)$; and (iii) $z_{\pp_i}(R) = N$ for all $i$.  Assuming the claim, Proposition \ref{4.2} gives $\rk(R) = N$.  We show the claim:
\\ \indent
(i) Certainly $R$ is a subring of $K$.  If $\alpha_1 = 1$, $\alpha_2,\ldots,\alpha_N$ is an $A$-basis for $S$,\footnote{We are permitted to take $\alpha_1 = 1$ by 
``Hermite's Lemma'' \cite[Prop. 6.14]{CA}.} then $1,x\alpha_2,\ldots,x\alpha_N$ is an $A$-basis for $R_x$.  So $R$ is an $A$-order in 
$K$.  \\ \indent
(ii) Fix $1 \leq i \leq r$ and define $\pp_i = p_iA + x S$.  Then $\pp_i$ is an ideal of $R$ and $R/\pp_i \cong A/(p_i)$ is a field, 
so $\pp_i$ is a prime ideal of $R$ containing $p$.  Let $\qq$ be a prime ideal of $R$ such that $\qq \cap A = (p_i)$.  Then since $p_i \mid x$ we have
\[ \pp_i^2 = (p_i A + xS)^2 = p_i^2 A + p_i x S + x^2 S = p_i^2 A + p_i x S \subset p_i R \subset \qq. \]
Since $\qq$ is prime we get $\qq \supset \pp_i$ and thus (since $\dim R = 1$) $\qq = \pp_i$. \\ \indent
(iii) Since $p_i,x\alpha_2,\ldots,x\alpha_N$ is an $A$-basis for $\pp_i$ and $p_i^2,p_ix_2,\ldots,p_i x_N$ is an $A$-basis for $\pp_i^2$,
we have $ \pp_i/\pp_i^2 \cong_A (A/(p_i))^N$.  Thus for all $1 \leq i \leq r$ we have
\[ z_{\pp_i}(R) = \dim_{R/\pp_i} \pp_i/\pp_i^2 = \dim_{A/(p_i)} (A/(p_i))^N = N.  \qedhere\]
\end{proof}

\begin{remark}
a) If $R$ is a PID with fraction field $F \supsetneq R$, then $F$ admits a degree $N$ field extension for all $N \in \Z^{\geq 2}$:  for $(p) \in \MaxSpec R$, we can take $K = F(p^{\frac{1}{N}})$. 
b) The $R = A + xS$ construction is modelled on \cite[Thm. 3.15]{Conrad}.
\end{remark}

\begin{example}
\label{EXAMPLE3.6}
Let $k$ be a field.  For $n \in \Z^{\geq 2}$ put $A = k[[t^n]]$, so $F = k((t^n))$, and let $K = k((t))$.  Let $S =k[[t]]$, the maximal 
$A$-order in $K$.  Then $R(S,t^n) = k[[t^n] + t^n k[[t]]$ is the ring $R_n$ of Example \ref{EXAMPLE4.1}.
\end{example}

\begin{example}
\label{EXAMPLE3.7}
For $n \in \Z^{\geq 2}$ put $A = \Z$, $K = \Q(2^{\frac{1}{n}})$, 
$S = \Z[2^{\frac{1}{n}}]$ and $x = 2$.  Then $R = R(S,x) = \Z[2^{\frac{n+1}{n}}, 2^{\frac{n+2}{n}},\ldots,2^{\frac{2n-1}{n}}]$ 
has rank $n$, and it is the order in $K$ that Matson used to prove Theorem \ref{MATSON}b).   
\end{example}

\subsection{Nonmaximal Orders II: Pullbacks and Locally Maximal Orders}
\textbf{} \\ \\ \noindent
Let $A$ be a DVR with fraction field $F$ and residue field $R/\mm = k$.  Let $l/k$ be a separable field extension of 
degree $d \in \Z^{\geq 2}$, let $K/F$ be the corresponding degree $n$ unramified (hence separable) field extension, and let $S$ be the integral closure of $A$ in $K$, so $S$ is a DVR with maximal ideal $\overline{\mm}$ (say) and $S/\overline{\mm} = l$.  Let $q: S \ra l$ be 
the quotient map, and put $R = q^{-1}(k)$.  By \cite[pp. 35-36]{Anderson-Mott92} and the Eakin-Nagata Theorem \cite[Cor. 8.31]{CA}, $R$ is local nearly Dedekind with normalization $S$ and an $A$-order in $K$ of rank $d$.  In \emph{loc. cit.}, Anderson and Mott show $\mm = q^{-1}(0) = \overline{\mm}$ and $R/\mm = k$.  Finally,  
$\mm/\mm^2 = \overline{\mm}/\overline{\mm}^2$ is a one-dimensional $l$-vector space hence a $d$-dimensional $k = R/\mm$-vector space, so Theorem \ref{SECONDBIGTHM} applies: we have $\rk(R) = d$.

\begin{thm}
\label{THIRDBIGTHM}
For $r \in \Z^+$, prime numbers $p_1,\ldots,p_r$ and $n_1,\ldots,n_r \in \Z^{\geq 2}$ with 
$\max_i n_i = n$, there is a degree $N$ number field $K$ and a $\Z$-order $R$ in $K$ with exactly $r$ singular primes $\pp_1,\ldots,\pp_r$, 
such that $e_{\pp_i}(R) = n_i$ for all $i$.  Thus $\rk(R) = n$.  
\end{thm}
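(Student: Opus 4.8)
The plan is to realize $K$ and $R$ globally as a single pullback which, localized at each $p_i$, becomes an Anderson--Mott ring of the kind constructed at the start of $\S$3.3; I take $N = n = \max_i n_i$ (the statement only asserts that \emph{some} degree works) and assume the $p_i$ to be distinct. The first step is to produce a monic $f \in \Z[t]$ of degree $n$ which is irreducible over $\Q$ and such that, for each $i$, the polynomial $f \bmod p_i \in \F_{p_i}[t]$ is separable and has an irreducible factor of degree $n_i$. This is a routine Chinese Remainder Theorem argument on the coefficients: over each $\F_{p_i}$ take the product of a monic irreducible of degree $n_i$ with a monic squarefree polynomial of degree $n - n_i$ coprime to it (such polynomials exist over any finite field); fix an auxiliary prime $q \notin \{p_1,\dots,p_r\}$ and a monic irreducible of degree $n$ over $\F_q$; and let $f$ be a monic degree-$n$ lift simultaneously congruent to all of these, the congruence modulo $q$ forcing $f$ irreducible over $\Q$ by Gauss's Lemma. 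Put $K = \Q[t]/(f) = \Q(\alpha)$, so $[K:\Q] = n$. Since $p_i \nmid \operatorname{disc}(f)$, the ring $\Z[\alpha]$ is maximal at $p_i$, so by Dedekind--Kummer $p_i\OO_K$ is a squarefree product of primes whose residue degrees match the degrees of the irreducible factors of $f \bmod p_i$; in particular $\OO_K$ has an unramified prime $\qq_i$ over $p_i$ with $\OO_K/\qq_i \cong \F_{p_i^{n_i}}$.

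Next I would define
\[ R = \{\, s \in \OO_K \mid s \bmod \qq_i \in \F_{p_i} \text{ for } i = 1, \dots, r \,\}, \]
the preimage of $\prod_{i=1}^{r} \F_{p_i}$ under $\OO_K \ra \prod_{i=1}^{r} \OO_K/\qq_i$. One checks at once that $R$ is a $\Z$-subalgebra of $\OO_K$, finitely generated as a $\Z$-module, containing $\qq_1 \cdots \qq_r$, with $\Frac R = K$; thus $R$ is a $\Z$-order in $K$, and $\overline R = \OO_K$ because $\OO_K$ is Dedekind. The crux is the conductor identity $\mathfrak c := (R : \OO_K) = \qq_1 \cdots \qq_r$: the inclusion $\supseteq$ is clear, and for $\subseteq$ one observes that an $\OO_K$-ideal $J \subseteq R$ with $J \not\subseteq \qq_i$ would surject onto the field $\OO_K/\qq_i = \F_{p_i^{n_i}}$ while having image contained in $\F_{p_i}$, forcing $n_i = 1$. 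Hence the primes of $\OO_K$ dividing $\mathfrak c$ are exactly $\qq_1, \dots, \qq_r$, and by the description of the singular primes of a nearly Dedekind domain recalled in $\S$2, the singular primes of $R$ are precisely $\pp_i := \qq_i \cap R$, $i = 1, \dots, r$; these are distinct since $\pp_i$ lies over the distinct rational primes $p_i$.

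For the local multiplicities I would use that $R$ agrees with its normalization away from the conductor: for any prime $\mathfrak{P}$ of $\OO_K$ not dividing $\mathfrak c$, the prime $\mathfrak{P} \cap R$ is regular and $\mathfrak{P}$ is the unique prime of $\OO_K$ lying over it. Since $\qq_i$ is the only prime above $p_i$ dividing $\mathfrak c$, this forces $\qq_i$ to be the unique prime of $\OO_K$ over $\pp_i$; consequently $\overline{R_{\pp_i}} = (\OO_K)_{\qq_i}$ is a DVR with residue field $\F_{p_i^{n_i}}$, we have $R_{\pp_i}/\pp_i R_{\pp_i} = \F_{p_i}$, and $R_{\pp_i}$ is exactly the pullback of $\F_{p_i}$ along $(\OO_K)_{\qq_i} \twoheadrightarrow \F_{p_i^{n_i}}$ --- an Anderson--Mott ring as in $\S$3.3 with $d = n_i$. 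Its maximal ideal $\mm$ equals the maximal ideal $\qq_i (\OO_K)_{\qq_i}$ of $(\OO_K)_{\qq_i}$, which is principal, so $\mm^j = \big(\qq_i (\OO_K)_{\qq_i}\big)^j$ as ideals of $R_{\pp_i}$ for every $j \geq 1$; hence $\mm^j/\mm^{j+1}$ is one-dimensional over $\F_{p_i^{n_i}}$ and therefore $n_i$-dimensional over $R_{\pp_i}/\mm = \F_{p_i}$, so that $e_{\pp_i}(R) = n_i$.

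All remaining localizations of $R$ are DVRs, of multiplicity $1$, so $e(R) = \max_i n_i = n$. Since $R$ is a non-normal one-dimensional Noetherian domain, Theorem \ref{FIRSTBIGTHM} gives $\rk(R) = e(R) = n$, which finishes the proof. I expect the only genuinely delicate points to be the conductor identity and the resulting identification of $R_{\pp_i}$ with the local model; the rest is bookkeeping with Dedekind--Kummer and the apparatus already assembled in $\S$2 and $\S$3.
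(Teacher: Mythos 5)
Your construction is correct and is essentially the paper's: both take $R$ to be the preimage of $\prod_{i=1}^r \F_{p_i}$ under $\OO_K \ra \prod_{i=1}^r \OO_K/\qq_i$ for unramified primes $\qq_i$ of residue degree $n_i$ over $p_i$, identify the localizations at the singular primes as the Anderson--Mott pullbacks of $\S$3.3, and read off $e_{\pp_i}(R) = n_i$. You differ in two inessential but worthwhile ways. First, you manufacture $K$ by an explicit CRT-plus-Dedekind--Kummer argument on a single polynomial, taking $N = n$, where the paper invokes weak approximation/Krasner to get a field of arbitrary degree $N$ with prescribed completions; since the theorem leaves $N$ unquantified, pinning $N = n$ is a legitimate reading, though your route does require the $p_i$ to be distinct, a hypothesis you state explicitly and the paper leaves implicit. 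Second, your endgame is shorter: having computed $e(R) = n$ you quote Theorem \ref{FIRSTBIGTHM} directly, whereas the paper passes through the $\pp_i$-adic completions and Theorem \ref{SECONDBIGTHM} to evaluate the local ranks; your route uses only machinery already established in $\S$2 and is, if anything, cleaner. Your verifications of the conductor identity $(R:\OO_K) = \qq_1 \cdots \qq_r$, of the resulting identification of the singular primes, and of the uniqueness of the prime of $\OO_K$ over each $\pp_i$ supply details the paper delegates to \cite{CGP15}, and all of them are sound.
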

\begin{proof} 
Step 1: The construction of $K$ and $R$ is essentially given in \cite[$\S$3.2-3.3]{CGP15}; 
the only difference is that in that construction one inverts all the regular primes to get a semilocal domain.   So we will content ourselves 
with a sketch.  For $1 \leq i \leq r$, let $\Q_{p_i^{n_i}}$ be the unramified extension of $\Q_{p_i}$ of degree $n_i$.  By weak approximation / Krasner's Lemma there is a number field $K$ of degree $N$ and primes $\mathcal{P}_1,\ldots,\mathcal{P}_r$ such that 
$K_{\mathcal{P}_i} \cong \Q_{p_i^{n_i}}$ for all $i$.  The local degree $[K_{\mathcal{P}_i}:\Q_{p_i}] = n_i$ is assumed (only) to be at most $N$; when $n_i < N$ this is handled by having other primes of $\Z_K$ lying over $p_i$.  We have 
\[ \Z_K/\mathcal{P}_1 \cdots \mathcal{P}_r \cong \prod_{i=1}^r \Z_K/\mathcal{P}_i \cong \prod_{i=1}^r \F_{p_i^{n_i}}. \]
Let $q: \Z_K \ra \prod_{i=1}^r \F_{p_i^{n_i}}$ be the corresponding quotient map.   Then we take 
\[ R = q^{-1}(\prod_{i=1}^r \F_{p_i}). \]
For $1 \leq i \leq r$, let $\pp_i = \mathcal{P}_i \cap R$ and let $q_i: (\Z_K)_{\mathcal{P}_i} \ra (\Z_K)_{\mathcal{P}_i}/\mathcal{P}_i
\cong \F_{p_i^{n_i}}$.  Then (as is shown in detail in \emph{loc. cit.}) for all $i$, the ring $R_{\pp_i}$ is the pullback $q_i^{-1}(\F_{p_i})$, and thus is local nearly Dedekind 
with $e_{\pp_i}(R) = e(R_{\pp_i}) = [\F_{p_i^{n_i}}:\F_{p_i}] = n_i$. \\
Step 2:  We have -- e.g. using CRT as above -- that $\pp_1,\ldots,\pp_r$ 
are the only singular primes of $R$, so 
\[z(R) = \max_{1 \leq i \leq r} n_i = n. \]
For  $1 \leq i \leq r$, let $\widehat{R_{\pp_i}}$ denote the $\pp_i$-adic completion of 
$R_{\pp_i}$.  Because there is a unique prime of $\overline{R}$ lying over $\pp_i$ (``analytically irreducible''), $\widehat{R_{\pp_i}}$ is 
itself a one-dimensional local Noetherian domain, with fraction field $K_{\pp_i}$, and moreover we have 
$e(R_{\pp_i}) = e(\widehat{R_{\pp_i}})$ and  $z(\widehat{R_{\pp_i}})  = z(R_{\pp_i}) = n_i$.   But $\widehat{R_{\pp_i}}$ is a $\Z_{p_i}$-order 
in $K_{\pp_i}$ of degree $n_i$, so Theorem \ref{SECONDBIGTHM} applies to give 
\[ \rk(R_{\pp_i}) = e(R_{\pp_i}) = e(\widehat{R_{\pp_i}}) = \rk(\widehat{R_{\pp_i}}) = z(\widehat{R_{\pp_i}}) = z(R_{\pp_i}) = z_{\pp}(R) \]
and thus 
\[ \rk(R) = \max_{1 \leq i \leq r} \rk(R_{\pp_i}) = \max_{1 \leq i \leq r}  z_{\pp_i}(R) = \max_{1 \leq i \leq r} n_i = n.  \qedhere\]
\end{proof}

\begin{remark}
a) There is an analogue of Theorem \ref{THIRDBIGTHM} with $\Z$ replaced by $\F_q[t]$.  \\
b) One would like to take $n = 1$ in Theorem \ref{THIRDBIGTHM}!   Unfortunately at present 
we cannot prove that there are infinitely many number fields with class number one: this is perhaps the most (in)famous 
open problem in algebraic number theory.
\end{remark}




\section{More Rings of Finite Rank}

\subsection{A Trichotomy}

\begin{thm}
\label{4.1}
a) Let $R = R_1 \times \cdots \times R_n$ be a finite direct product of rings.  Then 
\[ \rk(R) = \max_i \rk(R_i). \]
b) (Gilmer) For a Noetherian ring $R$, the following are equivalent: \\
(i) $R$ has finite rank. \\
(ii) For all minimal primes $\pp \in \Spec R$, the ring $R/\pp$ has finite rank.  \\
c) (Cohen-Gilmer) A ring of finite rank has dimension zero or one.
\end{thm}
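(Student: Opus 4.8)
The plan is to treat the three parts in order, since (c) will be deduced from (a) and (b). For part (a), I would reduce to the case $n = 2$ by induction, and then use the fact that ideals of a product $R_1 \times R_2$ are exactly products $I_1 \times I_2$ of ideals. Given such an $I = I_1 \times I_2$, a generating set for $I$ can be built from generating sets for $I_1$ and $I_2$ by padding with zeros, which gives $\mu(I) \leq \mu(I_1) + \mu(I_2)$ for free; but to get the sharp bound $\mu(I) = \max(\mu(I_1), \mu(I_2))$ one must be slightly more careful: pair up generators of $I_1$ with generators of $I_2$ componentwise, using that $(x_1, 0)$ and $(0, x_2)$ together with the idempotents $e_1 = (1,0)$, $e_2 = (0,1)$ let one recover each factor's generator from $(x_1, x_2)$. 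So $\mu(I_1 \times I_2) = \max(\mu(I_1), \mu(I_2))$, and taking suprema over all ideals gives $\rk(R_1 \times R_2) = \max(\rk(R_1), \rk(R_2))$. I expect this to be essentially routine; the only mild subtlety is making sure the supremum is attained in the right way (an ideal of $R_1$ of large $\mu$ extends to $I_1 \times 0$, which is an ideal of $R$ of the same $\mu$).

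For part (b), the implication (i) $\Rightarrow$ (ii) is immediate from Remark \ref{REMARK1}, since each $R/\pp$ is a quotient of $R$. For (ii) $\Rightarrow$ (i), let $\pp_1, \ldots, \pp_s$ be the minimal primes of the Noetherian ring $R$ (finitely many), and set $\N = \pp_1 \cap \cdots \cap \pp_s$, the nilradical. Since $R$ is Noetherian, $\N$ is nilpotent, say $\N^m = 0$. The strategy is to bound $\mu(I)$ for an ideal $I$ of $R$ in terms of the ranks of the $R/\pp_j$ via a filtration argument: filter $R$ (or rather $I$) by powers of $\N$, and observe that each successive quotient $\N^k I / \N^{k+1} I$ is a module over $R/\N$, which injects into $\prod_j R/\pp_j$. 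A finitely generated module over $R/\N$, and hence over the reduced ring $\prod R / \pp_j$ up to the subtlety that $R/\N$ need not be the whole product, has number of generators controlled by the ranks of the factors $R/\pp_j$ — here part (a) applied to $\prod_j R/\pp_j$ is the natural tool, combined with the fact that for a subring inclusion of reduced Noetherian rings with the same total quotient ring the relevant module-generation numbers stay finite. Summing the contributions of the $m$ layers of the $\N$-adic filtration of $I$ gives a finite bound on $\mu(I)$ independent of $I$, hence $\rk(R) < \aleph_0$.

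For part (c), assume $R$ has finite rank. By (b), every $R/\pp$ with $\pp$ minimal has finite rank, and each $R/\pp$ is a domain. By Cohen's Theorem \ref{FIRSTCOHEN}, $\dim R/\pp \leq 1$ for every minimal prime $\pp$. Since $\dim R = \max_{\pp \text{ minimal}} \dim R/\pp$, we conclude $\dim R \leq 1$, i.e. $R$ has dimension zero or one (dimension zero being the case where $R$ is the zero ring excluded or every minimal prime is maximal). This last step is a one-line deduction once (b) and Cohen's theorem are in hand.

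The main obstacle is the (ii) $\Rightarrow$ (i) direction of part (b): one must control $\mu(I)$ uniformly over all ideals $I$, and the honest way to do this is the $\N$-adic filtration together with a clean statement that finitely generated modules over a reduced Noetherian ring embedding in a finite product of domains of finite rank have uniformly bounded minimal generating sets. Pinning down exactly which known result (Forster--Swan plus part (a), or a direct argument) gives this cleanly, and verifying that the number of filtration steps $m$ and the per-step bound are both finite and independent of $I$, is where the real work lies; everything else is bookkeeping.
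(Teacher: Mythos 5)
Your parts a) and c) are correct and coincide with the paper's treatment: the paper proves a) by the same observation that every ideal of the product has the form $I_1 \times \cdots \times I_n$ (your componentwise pairing of generators via the idempotents is the right way to see $\mu(I_1\times\cdots\times I_n)=\max_i\mu(I_i)$), and it proves c) exactly as you do, by combining b) with Theorem \ref{FIRSTCOHEN}. For b), the implication (i) $\Rightarrow$ (ii) via Remark \ref{REMARK1} is also fine. The difference is that the paper does not prove (ii) $\Rightarrow$ (i) at all --- it simply cites Gilmer \cite[Thm.~2]{Gilmer72} --- whereas you attempt the argument yourself.

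Your sketch of (ii) $\Rightarrow$ (i) has a genuine gap, and it sits exactly where you suspect. Filtering by powers of the nilradical $\mathfrak{n}=\pp_1\cap\cdots\cap\pp_s$ produces layers that are finitely generated modules over the reduced ring $R/\mathfrak{n}$, and to conclude you would need to know that $R/\mathfrak{n}$ has finite rank, or at least that its finitely generated modules have uniformly bounded $\mu_*$. But $R/\mathfrak{n}$ is only a subring of $\prod_j R/\pp_j$, in general a proper one, so part a) does not apply to it, and there is no clean off-the-shelf fact that a reduced Noetherian subring with the same total quotient ring inherits finite rank; establishing finite rank for $R/\mathfrak{n}$ is essentially the problem you started with, since the reduced case with several minimal primes is precisely the nontrivial case. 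The standard repair --- and it is what Gilmer actually does --- is to use a different filtration: take the finite descending chain $R\supseteq\pp_1\supseteq\pp_1\pp_2\supseteq\cdots\supseteq\pp_1\cdots\pp_s\supseteq(\pp_1\cdots\pp_s)\pp_1\supseteq\cdots$, which terminates at $(0)$ because $\pp_1\cdots\pp_s\subseteq\mathfrak{n}$ is nilpotent. Each successive quotient is then a finitely generated module over a \emph{single} domain $R/\pp_i$ of finite rank, so Gilmer's Proposition 2 (quoted in the proof of Theorem \ref{4.5}: $\mu_*(M)\leq\rk(S)\,\mu(M)$ for a finitely generated module $M$ over a ring $S$ of finite rank) bounds each layer, and his Proposition 1 (subadditivity of $\mu_*$ along short exact sequences) assembles the layers into a finite bound for $\rk(R)=\mu_*(R)$ that is independent of any particular ideal. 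With that substitution your outline becomes a complete proof.
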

\begin{proof}
a) Every ideal in $R_1 \times \cdots \times R_n$ is of the form $I_1 \times \cdots \times I_n$ for ideals $I_i$ of $R_i$.   b) This is \cite[Thm. 2]{Gilmer72}.   c) Apply Theorem \ref{FIRSTCOHEN} and part b).
\end{proof}
\noindent
So if $R$ is a ring with $\rk(R) \in \Z^+$, exactly one of the following holds: \\
$\bullet$ $R$ is Noetherian of dimension zero, i.e., Artinian; \\
$\bullet$ $R$ is a Noetherian domain of dimension one; \\
$\bullet$ $R$ is Noetherian of dimension one and not a domain.  
\\ \\
We treated  domains in $\S$3, and we will discuss Artinian rings in $\S$4.2.  This leaves us with rings 
which are one-dimensional Noetherian and not domains.  One can show that there are such rings of all ranks quite cheaply: 
if $R$ is a domain of rank $n \in \Z^+$ then Theorem \ref{4.1}a) gives $\rk(R \times R) = n$.  The more 
interesting case is when the localization of $R$ at some minimal prime is not a domain, so in particular when $R$ has a unique 
minimal prime, which is nonzero.  A good example is a polynomial ring over a local Artinian ring.  In $\S$4.3 we study the ranks of polynomial rings.

\subsection{Artinian Rings}
\textbf{} \\ \\ \noindent
Let $\rr$ be an Artinian ring.  We denote by $\ell(\rr)$ the length of $\rr$ as an $\rr$-module, 
which is finite.  We denote by $n(\rr)$ the \textbf{nilpotency index} of $\rr$: the least $n \in \Z^+$ such that if $x \in \rr$ is nilpotent 
then $x^n = 0$.

\begin{prop}
\label{4.2}
Let $\rr$ be an Artinian ring.  \\
a) We have $\rk(\rr) \leq \ell(R)$.  \\
b) If $\ell(\rr) \geq 2$ (i.e., if $\rr$ is nonzero and not a field), then $\rk(\rr) \leq \ell(\rr)-1$.
\end{prop}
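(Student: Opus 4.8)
The plan is to reduce to the case where $\rr$ is local. Since an Artinian ring is a finite product $\rr = \rr_1 \times \cdots \times \rr_m$ of local Artinian rings, Theorem \ref{4.1}a) gives $\rk(\rr) = \max_i \rk(\rr_i)$, while length is additive over products, so $\ell(\rr) = \sum_i \ell(\rr_i) \geq \ell(\rr_j)$ for the index $j$ realizing the maximum. Hence both parts follow once they are proved for a local Artinian ring $(\rr,\mm)$. (For part b), note that if $\ell(\rr) \geq 2$ but some local factor $\rr_j$ is a field, then either $m \geq 2$ and $\ell(\rr) > \ell(\rr_j)$ gives room to spare, or $m = 1$ and $\rr$ itself is the field, contradicting $\ell(\rr)\geq 2$; so we may assume the relevant factor has length $\geq 2$ as well, i.e. is local and not a field.)

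For local $(\rr,\mm)$ with residue field $k = \rr/\mm$, the first step is part a). Let $I$ be an ideal. By Nakayama, $\mu(I) = \dim_k I/\mm I$, and since $I/\mm I$ is a subquotient of $\rr$ built out of composition factors all isomorphic to $k$, we get $\mu(I) \leq \ell(I) \leq \ell(\rr)$. This already gives $\rk(\rr) \leq \ell(\rr)$. For part b), I want to improve this by one when $\rr$ is not a field, i.e. when $\mm \neq 0$. The key observation is that the bound $\mu(I) \leq \ell(I)$ is only tight when $\mm I = 0$, i.e. when $I$ is a $k$-vector space sitting inside $\rr$; and the largest such ideal is the socle, or more to the point, the annihilator-type submodule $0 :_\rr \mm$. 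If $\mu(I) = \ell(I)$ then $I \subseteq (0:_\rr \mm)$, and in particular $I \neq \rr$ (as $\mm \neq 0$ forces $1 \notin (0:_\rr \mm)$), so $\ell(I) \leq \ell(\rr) - 1$ and thus $\mu(I) \leq \ell(\rr)-1$. On the other hand, if $\mu(I) < \ell(I)$, then $\mu(I) \leq \ell(I) - 1 \leq \ell(\rr) - 1$. Either way $\mu(I) \leq \ell(\rr)-1$, so $\rk(\rr) \leq \ell(\rr)-1$.

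The main obstacle is making the dichotomy ``$\mu(I) = \ell(I)$ forces $\mm I = 0$'' airtight: one needs that whenever $\mm I \neq 0$, the length $\ell(I)$ strictly exceeds $\dim_k I/\mm I$, which holds because $\mm I$ is then a nonzero submodule of $I$ disjoint (in the length sense) from the composition factors counted by $I/\mm I$ — concretely, $\ell(I) = \ell(I/\mm I) + \ell(\mm I) = \dim_k I/\mm I + \ell(\mm I) > \dim_k I/\mm I = \mu(I)$. With that inequality in hand the argument above closes. A mild alternative for part b), avoiding the socle discussion, is to observe directly that any proper ideal $I$ has $\ell(I) \leq \ell(\rr) - 1$, so $\mu(I) \leq \ell(\rr)-1$ for all proper $I$, while $\mu(\rr) = 1 \leq \ell(\rr)-1$ since $\ell(\rr) \geq 2$; taking the supremum over all ideals gives the claim. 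This second route is shorter and I would likely present it, keeping the length-additivity reduction to the local case as the only real setup.
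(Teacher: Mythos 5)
Your proof is correct and rests on the same two facts as the paper's: $\mu(I) \leq \ell(I)$ for every ideal, and the only ideal of full length $\ell(\rr)$ is $\rr$ itself, which is principal (this is exactly your ``second route,'' which you rightly prefer). The reduction to the local case via Nakayama is harmless but unnecessary: $\mu(M) \leq \ell(M)$ already holds for any finite-length module over any ring (pick one element from each step of a composition series), which is how the paper handles a general Artinian ring without decomposing it into local factors.
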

\begin{proof}
The result is clear when $\rr$ is the zero ring or is a field, so assume $\ell(\rr) \geq 2$.  If $\rr$ is principal then $\rk(\rr) = 1 \leq \ell(\rr)-1$.
 If $\rr$ is not 
principal, then there is an ideal $I$ with $2 \leq \mu(I) \leq \ell(I)$, and such an ideal is necessarily proper, so $\ell(\rr) \geq \ell(I)+1$.
\end{proof}
\noindent
The following result shows that the bound of Proposition \ref{4.2}b) is sharp.

\begin{cor}
\label{4.3}
Let $n \in \Z^{\geq 2}$.  Then: \\
a) For any field $k$, there is an Artinian $k$-algebra of rank $n$ and length $n+1$.  \\
b) There is a finite ring $R$ of rank $n$ and length $n+1$.  
\end{cor}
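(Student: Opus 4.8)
The goal is to realize the bound of Proposition \ref{4.2}b) with equality, for every $n \geq 2$. The natural candidate is a ring that is ``as non-principal as possible'' relative to its length: we want an ideal $I$ with $\mu(I) = n$ sitting inside a ring of length exactly $n+1$. The cleanest way to force $\mu(I)$ large is to take $I$ to be a vector space over the residue field on which the maximal ideal acts as zero, i.e., $I \cong k^n$ as a module; then $\mu(I) = n$. For this to live in a ring of length $n+1$, the ring itself should have maximal ideal $\mm$ with $\mm^2 = 0$ and $\dim_k \mm = n$. So first I would set $\rr = k[x_1,\ldots,x_n]/\mm^2$ where $\mm = \langle x_1,\ldots,x_n\rangle$; this is a local Artinian $k$-algebra with residue field $k$, length $1 + n$, and $\mm$ requires exactly $n$ generators, giving $\rk(\rr) \geq \mu(\mm) = n$. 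Combined with Proposition \ref{4.2}b), which gives $\rk(\rr) \leq \ell(\rr) - 1 = n$, we conclude $\rk(\rr) = n$. That proves part a).

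For part b) I would mimic the same construction over $\Z/p\Z$ (or just take $k = \F_p$ in part a)), obtaining the finite ring $R = \F_p[x_1,\ldots,x_n]/\langle x_1,\ldots,x_n\rangle^2$, which is finite of order $p^{n+1}$, hence of length $n+1$ as a module over itself (its composition factors are all $\cong \F_p$), and of rank $n$ by the computation just made. Alternatively, since Theorem \ref{4.1}a) says rank is unchanged under finite products, one could also assemble finite examples from smaller pieces, but the single local example is the most economical.

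The only point that needs a word of care is verifying $\mu(\mm) = n$, i.e., that $\mm$ cannot be generated by fewer than $n$ elements: this is immediate from Nakayama's lemma, since $\mm/\mm^2 \cong k^n$ has $k$-dimension $n$, so any generating set of $\mm$ has at least $n$ elements. I also want to double-check the length count: a $k$-basis of $\rr$ is $\{1, x_1, \ldots, x_n\}$, so $\dim_k \rr = n+1$, and since $\rr$ is a $k$-algebra with residue field $k$, the length of $\rr$ as an $\rr$-module equals $\dim_k \rr = n+1$. I do not anticipate a genuine obstacle here; the construction is essentially forced once one recognizes that equality in Proposition \ref{4.2}b) demands a non-principal ideal $I$ with $\ell(I) = \mu(I)$, and the quotient-by-$\mm^2$ ring is the universal way to produce one.
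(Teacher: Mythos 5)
Your proof is correct. The computation $\mu(\mm) = \dim_k \mm/\mm^2 = n$ via Nakayama, the length count $\ell(\rr) = \dim_k \rr = n+1$, and the squeeze $n = \mu(\mm) \leq \rk(\rr) \leq \ell(\rr)-1 = n$ from Proposition \ref{4.2}b) are all sound, and taking $k = \F_p$ does give a finite example for part b).

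Your route is genuinely different from the paper's, though. The paper obtains both examples as quotients $R/\pp^2$ of the non-normal orders constructed in Section 3: Example \ref{EXAMPLE4.1}/\ref{EXAMPLE3.6} ($R = k + t^nk[[t]]$, $\pp = t^nk[[t]]$) for part a), and Example \ref{EXAMPLE3.7} (the $\Z$-order in $\Q(2^{1/n})$) for part b). You instead build the square-zero extension $\rr = k[x_1,\ldots,x_n]/\mm^2 \cong k \oplus k^n$ directly. For part a) the two constructions actually produce isomorphic rings, since $(k+t^nk[[t]])/(t^nk[[t]])^2$ is exactly $k$ plus an $n$-dimensional square-zero ideal; you just reach it without passing through the domain. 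For part b) the rings differ: yours has characteristic $p$, whereas the paper's quotient of a $\Z$-order has characteristic $4$ (as $2 \in \pp \setminus \pp^2$ there). What your approach buys is self-containedness -- it needs only Nakayama and Proposition \ref{4.2}b), with no reliance on Section 3. What the paper's approach buys is the illustrative point that these extremal Artinian rings arise naturally as quotients of the one-dimensional domains already in hand, tying the dimension-zero and dimension-one constructions together. Both are complete proofs of the statement.
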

\begin{proof}
Let $R$ be a non-normal domain of finite rank $n$ with a maximal ideal $\pp$ such that $\dim_{R/\pp} \pp/\pp^2 = n$.  Then 
$R/\pp^2$ is Artinian, of rank $n$ and length $n+1$.   Taking $R$ as in Example \ref{EXAMPLE3.6} 
gives part a), and  taking $R$ as in Example \ref{EXAMPLE3.7} gives part b).  
\end{proof}

\subsection{Polynomial Rings of Finite Rank}
\textbf{} \\ \\ \noindent
Next we explore polynomial rings of finite rank.  For any nonzero ring $\mathfrak{r}$, a polynomial ring over $\mathfrak{r}$ 
in at least two indeterminates has Krull dimension at least $2$ and thus has infinite rank.  So we are 
reduced to the case of $\mathfrak{r}[t]$. 

\begin{thm}
\label{4.4}
For a ring $\rr$, the following are equivalent: \\
(i) The polynomial ring $\rr[t]$ has finite rank. \\
(ii) The ring $\rr$ is Artinian. 
\end{thm}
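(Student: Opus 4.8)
The plan is to prove the two implications separately, using the trichotomy of Theorem \ref{4.1}c) to control the Krull dimension.

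For the implication (ii) $\Rightarrow$ (i): suppose $\rr$ is Artinian. First I would reduce to the local case: an Artinian ring is a finite product of local Artinian rings, $\rr = \prod_j \rr_j$, hence $\rr[t] = \prod_j \rr_j[t]$, and by Theorem \ref{4.1}a) it suffices to bound $\rk(\rr_j[t])$. So assume $(\rr,\mm)$ is local Artinian with residue field $k$ and nilpotency index $n = n(\rr)$, so $\mm^n = 0$. The key structural fact is that $\rr[t]$ is one-dimensional Noetherian with a unique minimal prime, namely the nilradical $\N = \mm[t]$ (since $\mm = \operatorname{nil}(\rr)$), and $\rr[t]/\N \cong k[t]$ is a PID. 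I would then apply the Forster-Swan Theorem: for an ideal $I$ of $\rr[t]$, $\mu(I) \leq \sup_{\pp} (\mu_\pp(I) + \dim \rr[t]/\pp)$. At the maximal ideals the contribution $\dim \rr[t]/\pp = 0$, so only the primes containing $\N$ with $\dim \rr[t]/\pp = 1$ matter; there is essentially the generic behavior to control, and one localizes at $\N$ to get the local ring $\rr[t]_{\N}$, whose maximal ideal is $\mm \rr[t]_{\N}$, an ideal killed by taking $n$-th powers. The bound I expect is $\rk(\rr[t]) \leq \ell(\rr)$ — indeed this is exactly what the next paragraph of the paper (the length bound, with equality in the principal case) will establish — so for the present theorem it suffices to extract finiteness. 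Concretely: $\mu_\pp(I) \leq \ell(\rr[t]_\pp)$ and this length is bounded by $\ell(\rr)$ uniformly over $\pp$ (localization of $\rr$ can only shorten it, and base-changing to the residue field at $\pp$ as in the Sally-type argument keeps it bounded by $\ell(\rr)$), giving $\mu(I) \leq \ell(\rr) + 1$; a more careful accounting recovers $\ell(\rr)$, but finiteness is all that (i) requires.

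For the implication (i) $\Rightarrow$ (ii): suppose $\rr[t]$ has finite rank. By Theorem \ref{4.1}c), $\dim \rr[t] \leq 1$, hence $\dim \rr = 0$, so $\rr$ is a zero-dimensional ring. It remains to show $\rr$ is Noetherian. Here the natural move is: if $\rr$ has finite rank as a ring then so does its quotient and localization by Remark \ref{REMARK1}, but more directly, finite rank of $\rr[t]$ forces finite rank of $\rr \cong \rr[t]/(t)$, again by Remark \ref{REMARK1} (a quotient map). A ring of finite rank is Noetherian (stated in the introduction: finite rank implies $R$ is a Noetherian $R$-module). So $\rr$ is Noetherian of dimension zero, i.e. Artinian. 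This direction is essentially immediate given the earlier results.

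The main obstacle is the forward direction (ii) $\Rightarrow$ (i), specifically verifying that the Forster-Swan bound is actually finite — that is, that $\sup_\pp \mu_\pp(I)$ is bounded independently of $I$ and $\pp$. The subtlety is that $\rr[t]$ has infinitely many maximal ideals and infinitely many height-one primes, so one must see that the local multiplicities / minimal generator counts do not blow up. The cleanest route is to observe that for every prime $\pp$ of $\rr[t]$, the local ring $\rr[t]_\pp$ receives a surjection from $\rr_{\pp \cap \rr}[t]_{\pp}$ and that the "vertical" part contributes a uniformly bounded length (at most $\ell(\rr)$) while the residue extension $k(\pp)/k$ is handled as in the one-dimensional domain case of Theorem \ref{FIRSTBIGTHM}; alternatively one can pass to the analytic/completed picture. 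I expect the argument to parallel closely the proof of Theorem \ref{FIRSTBIGTHM}, with $\rr[t]_\pp$ playing the role of a one-dimensional local Cohen-Macaulay ring (it is, being a localization of a polynomial ring over an Artinian ring, hence Cohen-Macaulay of dimension one at the relevant primes), so that $\mu(I\rr[t]_\pp) \leq e(\rr[t]_\pp) \leq \ell(\rr)$ via \cite[Thm. 3.1.1]{Sally78}.
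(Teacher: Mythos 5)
Your (i) $\Rightarrow$ (ii) direction matches the paper's: get $\dim \rr \le 0$ from the Cohen--Gilmer dimension bound, and Noetherianity of $\rr$ from finite rank of the quotient $\rr[t]/(t) \cong \rr$. For (ii) $\Rightarrow$ (i) you take a genuinely different route. The paper reduces to $\rr$ local Artinian exactly as you do, but then finishes in one line with Gilmer's criterion (Theorem \ref{4.1}b)): $\rr[t]$ has a unique minimal prime $\mm[t]$, and $\rr[t]/\mm[t] \cong k[t]$ is a PID of finite rank, so $\rr[t]$ has finite rank --- no quantitative bound is extracted at this stage (that is deferred to Theorem \ref{4.5}, proved by a different induction along the filtration $\mm^i[t]$). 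You instead run Forster--Swan plus Sally's multiplicity bound, mimicking the proof of Theorem \ref{FIRSTBIGTHM}; this works and even yields an explicit bound $\rk(\rr[t]) \le \ell(\rr)+1$, but it costs you two verifications that the paper's route avoids: (1) that $\rr[t]_{\pp}$ is one-dimensional Cohen--Macaulay at each maximal $\pp$ (true: an Artinian local ring is CM, polynomial extension and localization preserve CM), and (2) the \emph{uniform} bound $e(\rr[t]_{\pp}) \le \ell(\rr)$ over the infinitely many maximal ideals $\pp$, which you assert but do not prove. That bound does hold --- if $f \in \rr[t]$ is monic with irreducible image generating $\pp \bmod \mm[t]$, then $e(\rr[t]_{\pp}) \le \ell\bigl(\rr[t]_{\pp}/(f)\bigr)$, and $\rr[t]/(f)$ is local Artinian, free of rank $\deg f$ over $\rr$ with residue field of degree $\deg f$ over $k$, hence of length exactly $\ell(\rr)$ --- but some such computation is needed, since mere finiteness of each $e(\rr[t]_{\pp})$ does not bound the supremum. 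One intermediate sentence of yours is literally false and should be struck: ``$\mu_{\pp}(I) \le \ell(\rr[t]_{\pp})$ and this length is bounded by $\ell(\rr)$'' fails at maximal $\pp$, where $\rr[t]_{\pp}$ is one-dimensional and of infinite length; your final paragraph's formulation via $\mu(I\rr[t]_{\pp}) \le e(\rr[t]_{\pp})$ is the correct one. Net assessment: your argument is sound once the multiplicity bound is justified, and it buys an explicit estimate, but Gilmer's criterion makes the paper's proof of finiteness essentially free.
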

\begin{proof}
(i) $\implies$ (ii): Suppose $\rr[t]$ has finite rank.  Then $\rr[t]$ is Noetherian, hence so is $\rr$.  Thus 
$\dim \rr[t] = 1 + \dim \rr$.  By Theorem \ref{FIRSTCOHEN} we have $\dim \rr = 0$, os $\rr$ is Artinian. \\
(ii) $\implies$ (i): If $\rr$ is Artinian, there are local Artinian rings $\rr_1,\ldots,\rr_r$ such that $\rr \cong \rr_1 \times \cdots 
\times \rr_r$, and then $\rr[t] \cong \rr_1[t] \times \cdots \times \rr_r[t]$.  So Theorem \ref{4.1}a) reduces us to showing: 
a polynomial ring $\rr[t]$ over a local Artinian ring $(\rr,\mm)$ has finite rank.  The ring $\rr[t]$ is Noetherian, with unique minimal prime  $\mm \rr[t]$.  Since $\rr[t]/\mm\rr[t] = (R/\mm)[t]$ is a PID, the ring $\rr[t]$ 
has finite rank by Theorem \ref{4.1}b).
\end{proof}

\begin{thm}
\label{4.5}
Let $\rr$ be an Artinian local ring of length $\ell$, and let $R = \rr[t]$.  Then 
\[ \rk R \leq \ell. \]
\end{thm}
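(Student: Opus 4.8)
The plan is to prove the bound directly, by filtering an arbitrary ideal along the $\mm$-adic filtration of $R=\rr[t]$ and counting generators one graded piece at a time; this avoids invoking Forster--Swan, though one could use it instead. Write $\mm$ for the maximal ideal of $\rr$ and $k=\rr/\mm$ for the residue field. Since $\rr$ is Artinian local, $\mm$ is nilpotent, say $\mm^N=0$, and the $\mm$-adic filtration $\rr\supseteq\mm\supseteq\mm^2\supseteq\cdots\supseteq\mm^N=0$ refines to a composition series, so setting $d_j=\dim_k\mm^j/\mm^{j+1}$ (finite, and $0$ for $j\ge N$) we get $\sum_{j\ge0}d_j=\ell(\rr)=\ell$. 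The structural point I would establish first is that $\mm^jR/\mm^{j+1}R$ is annihilated by $\mm R$ and, as a module over $R/\mm R=k[t]$, is free of rank $d_j$: indeed, since $\rr[t]$ is free (hence flat) over $\rr$, tensoring $0\to\mm^{j+1}\to\mm^j\to\mm^j/\mm^{j+1}\to 0$ up to $R$ identifies $\mm^jR/\mm^{j+1}R$ with $(\mm^j/\mm^{j+1})\otimes_k k[t]\cong k[t]^{d_j}$, and $k[t]$ is a PID.

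Next I would take an arbitrary ideal $I$ of $R$ and filter it by $F_j:=I\cap\mm^jR$, so $I=F_0\supseteq F_1\supseteq\cdots\supseteq F_N=0$. Because $I$ is an ideal, $(\mm R)F_j\subseteq I\cap\mm^{j+1}R=F_{j+1}$, so each $F_j/F_{j+1}$ is a $k[t]$-module; and since $F_j\cap\mm^{j+1}R=F_{j+1}$, it embeds into $\mm^jR/\mm^{j+1}R\cong k[t]^{d_j}$. As $k[t]$ is a PID, a submodule of $k[t]^{d_j}$ is free of rank $e_j\le d_j$, so $F_j/F_{j+1}$ is generated over $k[t]$ — hence over $R$ — by $e_j\le d_j$ elements; I would choose lifts $x_{j,1},\dots,x_{j,e_j}\in F_j$.

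Finally I would let $J$ be the ideal of $R$ generated by all the $x_{j,i}$ and show $F_j\subseteq J$ by descending induction on $j$ (base case $F_N=0\subseteq J$): if $F_{j+1}\subseteq J$ and $y\in F_j$, then the image of $y$ in $F_j/F_{j+1}$ is an $R$-combination of the images of $x_{j,1},\dots,x_{j,e_j}$, so $y\in J+F_{j+1}=J$. Taking $j=0$ gives $I=F_0\subseteq J\subseteq I$, whence $\mu(I)\le\sum_j e_j\le\sum_j d_j=\ell$; as $I$ was arbitrary, $\rk(R)\le\ell$.

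I do not anticipate a serious obstacle: this is the standard ``filter and lift'' technique, and the only points needing care are bookkeeping — the compatibility of the two filtrations ($F_j\cap\mm^{j+1}R=F_{j+1}$ and $(\mm R)F_j\subseteq F_{j+1}$, both immediate from $I$ being an ideal), and the computation of the associated graded pieces $\mm^jR/\mm^{j+1}R$ as free $k[t]$-modules, which is exactly where flatness of $\rr[t]$ over $\rr$ enters. (The same local computation applied at each prime of $R$ also reproves $\rk(R_\pp)\le\ell$, so Forster--Swan together with Proposition~\ref{4.2} would yield an alternative proof.)
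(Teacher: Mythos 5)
Your proof is correct and follows essentially the same route as the paper's: the paper also filters along the powers of $\mathcal{P}=\mm R$, bounds the $i$-th graded piece by $\dim_k \mm^i/\mm^{i+1}$ using that it is a module over the rank-one ring $k[t]$, and sums these bounds to get $\ell(\rr)$, citing Gilmer's two propositions (subadditivity of $\mu_*$ along an exact sequence, and $\mu_*(M)\le \rk(S)\,\mu(M)$ for an $S$-module $M$) where you instead carry out the filter-and-lift argument and the PID-submodule computation by hand. Your write-up is a self-contained unpacking of exactly those two lemmas, so there is nothing to correct.
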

\begin{proof}
Let $\pp$ be the unique prime ideal of $\rr$,  let $k = \rr/\pp$.  Then $\mathcal{P} = \pp[t]$ is the unique minimal prime of $R$.   By Theorem \ref{4.4}, $R$ has finite rank.    The main input  was Theorem \ref{4.1}b), and we will get the upper bound $\rk R \leq \ell$ by following Gilmer's proof.  
For $1 \leq i \leq \ell$, let 
\[ R_i = R/\mathcal{P}^i = \rr[t]/\pp^i \rr[t] = \rr/\pp^i [t]. \]
  We will show inductively that $\rk R_i$  is at most the 
length $\ell(\rr/\pp^i)$ of $\rr/\pp^i$.  \\
Base Case $(i = 1)$: The ring $R_1 = \rr/\pp [t] = k[t]$ is a PID, thus $\rk R_1 = 1 = \ell(k)$.  \\
Inductive Step: Suppose $1 \leq i < \ell$ and $\rk R_i \leq \ell(\rr/\pp^i)$.  Consider the exact sequence of $R$-modules 
\[ 0 \ra \mathcal{P}^i/\mathcal{P}^{i+1} \ra R/\mathcal{P}^{i+1} \ra R/\mathcal{P}^i \ra 0. \]
For any surjective homomorphism of rings $R \ra S$, the rank $\mu_*(S)$ as an $R$-module is equal to its rank $\rk S$ 
as a ring.    Thus our inductive hypothesis gives $\mu_*(R/\mathcal{P}^i) \leq \ell(\rr/\pp^i)$.   Moreover the rank of $\mathcal{P}^i/\mathcal{P}^{i+1}$ as an $R$-module is its rank as an $R/\mathcal{P} = k[t]$-module.  By \cite[Prop. 2]{Gilmer72} we have 
\[ \mu_*(\mathcal{P}^i/\mathcal{P}^{i+1}) \leq \rk(k[t]) \mu_R(\mathcal{P}^i/\mathcal{P}^{i+1}) = \mu_{\rr}(\pp^i/\pp^{i+1}) = 
\ell(\pp^i/\pp^{i+1}). \]
By \cite[Prop. 1]{Gilmer72} we have 
\[ \rk R_{i+1} = \mu_*(R/\mathcal{P}^{i+1}) \leq \mu_*(\mathcal{P}^i/\mathcal{P}^{i+1}) + \mu_*(R/\mathcal{P}^i) 
\leq \ell(\pp^i/\pp^{i+1}) + \ell(\rr/\pp^i) = \ell(\rr/\pp^{i+1}), \]
completing the induction.  Since the nilpotency index of $\rr$ is at most $\ell$, we have
\[ \rk R =  \rk \rr[t]  = \rk \rr/\pp^{\ell} [t] = \rk R_{\ell} \leq \ell(\rr/\pp^{\ell}) = \ell(\rr) = \ell. \qedhere \]
\end{proof}
\noindent
Since an Artinian ring is a finite product of local Artinian rings, Theorem \ref{4.5} implies: for any Artinian ring $\rr$, the 
rank of $\rr[t]$ is bounded above by the maximum length of the local Artinian factors of $\rr$.

\begin{cor}
Let $\rr$ be a nonzero principal Artinian ring, which we may decompose as $\rr = \prod_{i=1}^r \rr_i$, with each $\rr_i$ a local 
Artinian principal ring.   For $1 \leq i \leq r$, let $n_i$ be the length of $\rr_i$ (which coincides with its nilpotency index), and let 
$n = \max_{1 \leq i \leq r} n_i$.  Then $\rk \rr[t] = n$.
\end{cor}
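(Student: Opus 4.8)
The plan is to obtain both inequalities from Theorems~\ref{4.1} and~\ref{4.5}, reducing everything to a single local factor. Since $\rr \cong \rr_1 \times \cdots \times \rr_r$ gives $\rr[t] \cong \rr_1[t] \times \cdots \times \rr_r[t]$, Theorem~\ref{4.1}a yields $\rk \rr[t] = \max_i \rk \rr_i[t]$. Each $\rr_i$ is Artinian local of length $n_i$, so Theorem~\ref{4.5} gives $\rk \rr_i[t] \le n_i \le n$, whence $\rk \rr[t] \le n$. For the reverse inequality, fix an index $i$ with $n_i = n$; since $\rk \rr[t] \ge \rk \rr_i[t]$, it suffices to prove: if $(\rr,\pp)$ is a local principal Artinian ring of length $n$ with $\pp = (\pi)$ and residue field $k = \rr/\pp$, then $R := \rr[t]$ has an ideal that cannot be generated by fewer than $n$ elements. (The case $n = 1$, where $\rr$ is a field, is immediate, so assume $n \ge 2$; then $\pi^{n-1} \ne 0 = \pi^n$ and $\operatorname{ann}_{\rr}(\pi^a) = (\pi^{n-a})$ for $0 \le a \le n$.)

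I would take $I := \mathfrak{M}^{n-1}$, where $\mathfrak{M} := (\pi, t)$ is the maximal ideal of $R$ with $R/\mathfrak{M} = k$. As an ideal, $\mathfrak{M}^{n-1}$ is generated by the $n$ monomials $\pi^a t^{\,n-1-a}$ with $0 \le a \le n-1$ (higher powers of $\pi$ vanish), so $\mu(I) \le n$. For the lower bound, note that any generating set of $I$ maps onto a $k$-spanning set of the quotient module $I/\mathfrak{M}I = \mathfrak{M}^{n-1}/\mathfrak{M}^n$, so it is enough to check $\dim_k \mathfrak{M}^{n-1}/\mathfrak{M}^n = n$. The $n$ monomial classes span this space: for $r \in R$, replacing $r$ by its residue in $k$ changes $r \cdot \pi^a t^{\,n-1-a}$ only by an element of $\mathfrak{M} \cdot \mathfrak{M}^{n-1} = \mathfrak{M}^n$. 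So the real point is that these $n$ classes are $k$-linearly independent, i.e., that $f = \sum_{a=0}^{n-1} c_a \pi^a t^{\,n-1-a} \in \mathfrak{M}^n$ (with $c_a \in \rr$) forces $c_a \in \pp$ for all $a$. Writing a general element of $\mathfrak{M}^n = \sum_{i=0}^n \pi^i t^{\,n-i} R$ and collecting coefficients shows that the coefficient of $t^k$ in any element of $\mathfrak{M}^n$ lies in $(\pi^{\max(0,\, n-k)})$; applying this to $f$ with $k = n-1-a$ gives $c_a \pi^a \in (\pi^{a+1})$, hence $(c_a - d_a\pi)\pi^a = 0$ for some $d_a \in \rr$, hence $c_a - d_a\pi \in \operatorname{ann}_{\rr}(\pi^a) = (\pi^{n-a}) \subseteq \pp$ since $a \le n-1$, and so $c_a \in \pp$. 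Thus $\dim_k \mathfrak{M}^{n-1}/\mathfrak{M}^n = n$, so $\mu(I) = n$ and $\rk R \ge n$, which completes the proof.

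Everything outside the independence computation is routine bookkeeping with the two cited theorems, so that computation is the only real step. The feature of the argument that genuinely uses principality of $\rr$ is the identification $\operatorname{ann}_{\rr}(\pi^a) = (\pi^{n-a})$ together with the ``conductor-type'' divisibility $\pi^{\max(0,n-k)} \mid (\text{coeff.\ of } t^k)$ for elements of $\mathfrak{M}^n$; keeping these exponents straight (and making sure the chosen monomials really do span $\mathfrak{M}^{n-1}/\mathfrak{M}^n$) is the main thing one has to be careful about.
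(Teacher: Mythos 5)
Your proof is correct and follows essentially the same route as the paper: reduce to a single local factor via Theorem \ref{4.1}a), get the upper bound from Theorem \ref{4.5}, and show the ideal $\mm^{n-1}$ with $\mm = \langle \pi, t\rangle$ needs $n$ generators by computing $\dim_k \mm^{n-1}/\mm^{n} = n$. Your coefficient-by-coefficient verification of the linear independence (via $\operatorname{ann}_{\rr}(\pi^a) = (\pi^{n-a})$) is in fact more complete than the paper's one-line observation, which only addresses the case of unit coefficients.
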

\begin{proof}
We easily reduce to the case in which $\rr$ 
is local with maximal ideal $\pp = (\pi)$.  Theorem \ref{4.5} gives $\rk \rr[t] \leq n$.  Let $\mm = \langle \pi, t \rangle$, so $\mm$ is a maximal ideal of $\rr[t]$ and 
$R/\mm = \rr/\pi \rr = k$, say.  We claim that the ideal $\mm^{n-1} = \langle \pi^{n-1}, \pi^{n-2} t,
\ldots, \pi t^{n-2}, t^{n-1} \rangle$ requires $n$ generators.  Indeed, observe that for $a_1,\ldots,a_{n} \in \rr^{\times}$, 
$a_1 \pi^{n-1} + a_2 \pi^{n-2} t + \ldots + a_{n} t^{n-1}$ cannot be expressed as an $\rr$-linear combination of terms $\pi^i t^{j}$ with $i+j \geq n$, so $\dim_k \mm^{n-1}/\mm^{n} = n$ and $\mu(\mm^{n-1}) = n$.
\end{proof}

\begin{remark}
\label{4.6}
a) Theorem \ref{4.5} implies: if $A$ is a PID, $\pi \in R$ a prime element and $n \in \Z^+$, then $\rk A/\pi^n A[t] = n$.  In fact this 
is equivalent: every local principal Artinian ring is a quotient of a PID \cite[Cor. 11]{Hungerford68}.   P. Pollack has shown me a thoroughly elementary proof that $\rk A/\pi^n A[t] \leq n$.  \\
b) In particular, for a prime number $p$ and $n \in \Z^+$ we have 
\[ \rk \Z/p^n \Z[t] = n. \]
The case $p = n = 2$ appears in Matson's thesis \cite[p. 44, Example 1.3.15]{Matson}.  Her proof uses that $\Z/4\Z$ has a unique nonzero zero-divisor.  The only other ring with this property is $\Z/2\Z[t]/(t^2)$, so this argument is rather specialized.  Matson's result was our motivation for finding Theorem \ref{4.5}. 
\end{remark}


\begin{thebibliography}{CGP15}


\bibitem[AM92]{Anderson-Mott92} D.D. Anderson and J.L. Mott, \emph{Cohen-Kaplansky domains: integral domains with a finite number of irreducible elements.} J. Algebra 148 (1992), 17--41


\bibitem[As51]{Asano51} K. Asano, \emph{\"Uber kommutative Ringe, in denen jedes Ideal als Produkt von Primidealen darstellbar ist}. J. Math. Soc. Japan 3, (1951), 82�-90. 

\bibitem[CA]{CA} P.L. Clark, \emph{Commutative Algebra}.  \url{http://math.uga.edu/~pete/integral2015.pdf}

\bibitem[CGP15]{CGP15} P.L. Clark, S. Gosavi and P. Pollack, \emph{The number of atoms in an atomic domain.} 
\url{http://alpha.math.uga.edu/~pete/CGP15.pdf}

\bibitem[Co]{Conrad} K. Conrad, \emph{The conductor ideal}. \url{http://www.math.uconn.edu/~kconrad/blurbs/gradnumthy/conductor.pdf}

\bibitem[Co50]{Cohen50} I.S. Cohen, \emph{Commutative rings with restricted minimum condition.} Duke Math. J. 17 (1950), 27--42.



\bibitem[Fo64]{Forster64} O. Forster, \emph{\"Uber die Anzahl der Erzeugenden eines Ideals in einem Noetherschen Ring}. Math. Z.  84 (1964), 80–-87.

\bibitem[Gi72]{Gilmer72} R. Gilmer, \emph{On commutative rings of finite rank.}
Duke Math. J. 39 (1972), 381–-383. 

\bibitem[Hu68]{Hungerford68} T.W. Hungerford, \emph{On the structure of principal ideal rings}.
Pacific J. Math. 25 (1968), 543–-547.

\bibitem[Je63]{Jensen63} C.U. Jensen, \emph{On the characterizations of Pr\"ufer rings}. Math. Scand. 13 (1963), 90-�98. 

\bibitem[Ma08]{Matson} A. Matson, \emph{ Results Regarding Finite Generation, Near Finite Generation and the Catenary Degree}. 
PhD thesis, North Dakota State University, 2008.

\bibitem[Ma09]{Matson08}  A. Matson, \emph{Rings of finite rank and finitely generated ideals}. J. Commut. Algebra 1 (2009),  537–-546.

\bibitem[Sa78]{Sally78} J.D. Sally, \emph{Numbers of generators of ideals in local rings}. Marcel Dekker, Inc., New York-Basel, 1978.

\bibitem[Sw67]{Swan67} R.G. Swan, \emph{The number of generators of a module}.  Math. Z.  102 (1967), 318–-322.

\bibitem[Wa73]{Watanabe73} K. Watanabe, \emph{Some examples of one dimensional Gorenstein domains}. Nagoya Math. J.
     49 (1973), 101--109.





\end{thebibliography}
\end{document}